\newtheorem{theorem}{Theorem}[section]
\newtheorem*{theorem*}{Theorem}
\newtheorem{proposition}[theorem]{Proposition}
\newtheorem{lemma}[theorem]{Lemma}
\newtheorem{corollary}[theorem]{Corollary}
\newtheorem{remark}[theorem]{Remark}
\begin{document}

\title{Convergence properties of dynamic mode decomposition for analytic interval maps}

\author{Elliz Akindji$^{1)}$, Julia Slipantschuk$^{2)}$,\\
Oscar F. Bandtlow$^{1)}$,
and Wolfram Just$^{3) \dagger}$\\
$^{1)}$School of Mathematical Sciences,\\ Queen Mary University of London,
London, UK\\
$^{2)}$Department of Mathematics,\\  University of Warwick,
Coventry, UK\\
$^{3)}$Institute of Mathematics,\\ University of Rostock,
Rostock, Germany\\
$^\dagger$E-mail: wolfram.just@uni-rostock.de
}

\date{10th April 2024}

\maketitle

\begin{abstract}
Extended dynamic mode decomposition (EDMD) is a data-driven algorithm
for approximating spectral data of the
Koopman operator associated to a dynamical system,
combining a Galerkin method of order $N$ and collocation method of order $M$.
Spectral convergence of this method subtly depends on appropriate choice of
the space of observables. For
chaotic analytic full branch maps of the interval, we derive a constraint between $M$ and $N$ guaranteeing spectral convergence of EDMD. 
\end{abstract}

\paragraph{Keywords:} dynamic mode decomposition, transfer operator, Koopman operator

\paragraph{MSC Classification:}
37C30, %Functional analytic techniques in dynamical systems; zeta functions, (Ruelle-Frobenius) transfer operators, etc.
37E05, %Dynamical systems involving maps of the interval
37M10, %Time series analysis of dynamical systems
37M25, %Computational methods for ergodic theory (approximation of invariant measures, computation of Lyapunov exponents, entropy, etc.)
47A58 %Linear operator approximation theory

\section{Context and Results}\label{sec:1}

Extended dynamic mode decomposition (EDMD) is an increasingly popular tool
for data analysis in complex systems which is used to identify
dynamically relevant modes of a dynamical system based on observations, see for example
\cite{MeBa_PD04, Me_05, RoMeBaScHe_JFM09, Schm_JFM10, BuMoMe_CHAOS12,WiKeRo_JNS15, KBBP_16}.
The rationale of this approach consists in computing effective
modes in dynamical systems, which follows ideas tracing their origins
in the context of statistical data analysis \cite{Karh_AASF47}.
At their core, these methods condense the dynamical observations into a suitably chosen effective
linear evolution matrix. The eigenvalues and eigenvectors of this
matrix then provide information concerning the relevance and structure of the
effective degrees of freedom of the system.
While the idea of EDMD has been predominantly pushed by applications in fluid dynamics,
the method also aims more broadly to provide useful insight into a variety of
real world data analysis problems. Conceptually, it
is based on the observation that general dynamical systems can be described
by global evolution operators, known as transfer or Perron-Frobenius operators, or
their formal adjoints, known as Koopman operators.

In the last decade considerable
progress has been made in the theoretical underpinning of EDMD, for example, by studying
data-driven methods to
approximate Perron-Frobenius and Koopman operators which feature in
the work of Dellnitz and Junge \cite{DeJu_JNA99}
and which have since been extended in many ways, see
\cite{DeFrSe_NONL00, FrGoQu_JCD14, KlKoSc_JCD16, KlGePeSc_NONL18, Gian_ACHA19,DGS_21, ZZ_23} to name but a few. For broader context, a comparison of various data-driven algorithms can be found in
\cite{KlNuKoWuKeScNo_JNS18},
\cite{Gian_ACHA19} gives a comprehensive overview of the historical context,
and \cite{Co_23a} provides a recent review of the many variants of dynamic mode decomposition algorithms.
In particular, substantial progress has been made in the fundamental understanding
of EDMD as a finite-rank approximation scheme of the Koopman operator.
For a dynamical system given by a map $T$, this operator is given by
composition with $T$, and plays a crucial role,
in particular for the study of decay of correlations.
In most of the literature concerned with data-driven approaches,
the operator is considered on the space of square-integrable functions, where for mixing dynamical systems it has a single simple eigenvalue, while other dynamical features are hidden in the continuous spectrum. With data-driven methods it is possible to rigorously approximate the spectral measure in this setting, as for example is done in \cite{KPM_20} using Christoffel-Darboux kernels, or in \cite{CB_24} with a residual-based approach to remove spurious eigenvalues,
see also \cite{Co_23b} for related results.

On the other hand, for certain chaotic dynamical systems of sufficient regularity and with suitably chosen observables, EDMD correctly 
determines rates of correlation decay.
This can be understood by enriching $L^2$ with generalised functions or distributions,
thereby turning the Koopman operator on this extended space into a
(quasi)-compact operator with discrete spectrum well-approximated
by the EDMD scheme, see \cite{SlBaJu_CNSNS20,BJS_23,Wo_23}.
This is explained by the fact that the Koopman operator is adjoint to the transfer operator,
which restricted to a dual space (densely and continuously embedded in $L^2$)
enjoys strong spectral properties, see, for example, \cite{KeLi_99,Bal_00}.
This structure can be seen as an instance of a rigged Hilbert space, see \cite{SlBaJu_CNSNS20} for a discussion of this point of view  or \cite{SATB_96} for a previous use in the dynamical systems context and
the very recent work \cite{IHIK_24} in the data-driven setting.

Nevertheless there remain substantial open questions regarding
convergence and quantitative accuracy of the EDMD method.
In this article we aim to contribute to this challenge.
In order to make quantitative progress,
in the spirit of \cite{SlBaJu_CNSNS20,BJS_23} we focus
on the simplest kind of complex dynamical systems,
chaotic expansive one-dimensional maps,
whose statistical long-term behaviour is well understood.
We consider a discrete dynamical system given by a map on an interval,
say $T \colon [-1,1] \rightarrow [-1,1]$, and the associated
Koopman operator formally given by $(\mathcal{K} f)(x)=f(T(x))$.

To briefly and informally summarize the EDMD algorithm introduced in \cite{WiKeRo_JNS15}, one assumes
that the dynamics (in our case given by the interval map $T$) is observed through a set of
$N$ observables $\{ \psi_0,\ldots, \psi_{N-1}\}$. Further one assumes
that the dynamics is recorded at $M$ nodes $\{x_0,\ldots,x_{M-1}\}$ in the phase space, which might arise from times series data, be sampled from a distribution or be a predefined set. One then solves the generalised
eigenvalue problem given by
\begin{equation}\label{eq:1.1}
\lambda \sum_{n=0}^{N-1} H^{(M)}_{kn} u_n = \sum_{n=0}^{N-1} G^{(M)}_{k n} u_n\, ,
\end{equation}
where
\begin{equation}\label{eq:1.2}
\begin{aligned}
H^{(M)}_{kn} &= \frac{1}{M} \sum_{m=0}^{M-1} \psi_k(x_m) \psi_n(x_m) \\
G^{(M)}_{kn} &= \frac{1}{M} \sum_{m=0}^{M-1} \psi_k(T(x_m)) \psi_n(x_m)
\end{aligned}
\end{equation}
define the $N\times N$ square matrices $\hat H^{(M)}_N$ and $\hat G^{(M)}_N$. Ideally,
the solutions of the problem \eqref{eq:1.1}
are good approximations for the spectral data of the
(appropriately defined) Koopman operator. This, however, depends sensitively on the choice and the number $N$ of observables, and the choice and the number $M$ of nodes.

To emphasise the challenge we face, consider the basic textbook example of a piecewise linear full branch
map of the interval, the skewed doubling map on $[-1,1]$ given by
\begin{equation}\label{eq:3.1}
T_D(x)= \left\{
\begin{array}{rcr}
-1+2 (x+1)/(1+a) & \mbox{ if } & -1 \leq x\leq a\\
1+2 (x-1)/(1-a) & \mbox{ if } & a < x \leq 1
\end{array} \right.,
\end{equation}
where the parameter $a\in (-1,1)$ determines the skew of the map.
For the set of nodes we simply take
a lattice of equidistant points, $x_m=-1+(2m+1)/M$, $m=0,\ldots, M-1$. 
For the set of observables, Fourier modes seem
to be a sensible choice, that is $\psi_k(x)=\exp(i \pi (k-K)x)$ where $N=2 K+1$ is odd and
$k=0,\ldots, N-1$. If $M > N$, the matrix $\hat H^{(M)}_N$ is
diagonal\footnote{Here, for simplicity, we replace the second observable in
\eqref{eq:1.2} by its complex conjugate, which amounts to relabelling the matrix indices.}.
The outcome of the EDMD algorithm, that is, the eigenvalues in \eqref{eq:1.1}
for certain values of $N$ and $M$, are shown in the left panel of Figure~\ref{fig:1.1}.
These eigenvalues are scattered over the complex plane
and the data
seem to display a very slow convergence when increasing the number of nodes and observables.
Furthermore, these data do not seem to correspond to the
rates of correlation decay
of the
skewed doubling map.  For full-branch linear interval maps the (suitably defined) decay rates are in fact well known,
see \cite{MoSoOs_PTP81} for an elementary account or the appendix of \cite{SlBaJu_JPA13} in a slightly more general setting. 
In particular, for the map \eqref{eq:3.1} these decay rates are given
by $\lambda_n=((1+a)/2)^{n+1}+
((1-a)/2)^{n+1}$, $n\in \mathbb{N}_0$. Since EDMD produces counterintuitive outcomes
already in this simple setup there are certainly some issues which need to be clarified.

An alternative setting arises from choosing monomials as the observables,
that is, $\psi_k(x) = x^k$ for
$k = 0, \ldots,  N-1$, while keeping the same set of equidistant points as the nodes. 
The results of applying EDMD in this case are presented in the right panel of
Figure~\ref{fig:1.1}.
We observe that in this case,
the eigenvalues in \eqref{eq:1.1} do approximate $\lambda_n$, however the convergence strongly depends
on the number of chosen nodes $M$, and in particular can deteriorate for larger $N$ if $M$ is not increased appropriately.

\begin{figure}[!h]
\includegraphics[width=0.5 \textwidth]{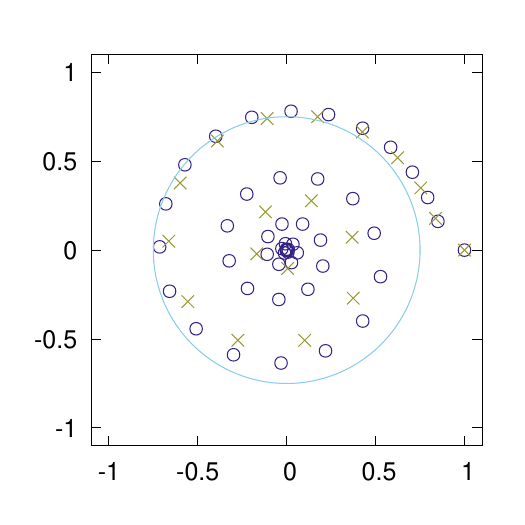}
\includegraphics[width=0.5 \textwidth]{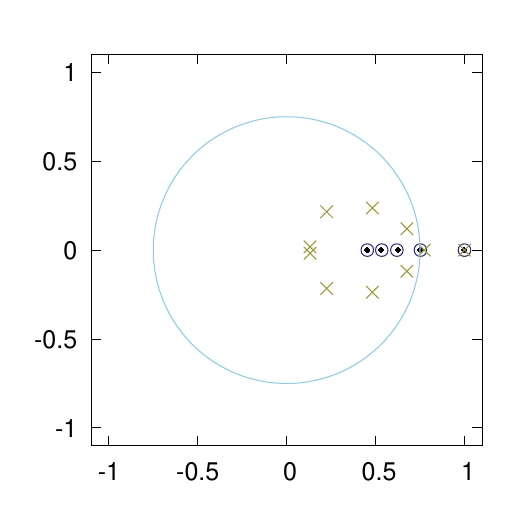}

\caption{Eigenvalues in the complex plane
for the skewed doubling map (Equation (\ref{eq:3.1})) with $a=1/\sqrt{2}$, obtained via EDMD using
Equation (\ref{eq:1.1}). Left: The chosen observables are Fourier modes with
$N=50$, $M=10000$ (blue, circle) and $N=20$, $M=5000$ (amber, cross).
The circle has a radius determined by the correlation decay rate of smooth
observables, $\lambda_1 = (1+a^2)/2$. Right: The chosen observables are monomials with $N=5$, $M=10000$ (blue, circle)
and $N=10$, $M=10000$ (amber, cross). The dots correspond to $\lambda_n=((1+a)/2)^{n+1}+
((1-a)/2)^{n+1}$ for $n\ = 0, 1, 2, 3, 4$.}
\label{fig:1.1}
\end{figure}

Our main result (Theorem \ref{thrm:1}) implies the following connection between the eigenvalues of the EDMD matrices and the
eigenvalues of the Koopman operator associated to an analytic full branch interval map. The EDMD matrices $\hat H^{(M)}_N$ and $\hat G^{(M)}_N$ here are defined as in Equation (\ref{eq:1.2}), with observables
given by monomials, $\psi_k(x) = x^k$ for $k = 0, \ldots, N-1$, and
a collection of equidistant nodes
$x_m=-1+\delta+2 m/M \in [-1, 1]$,
$m = 0, \ldots, M-1$, for any choice of $0\leq\delta\leq 2/M$.
Precise definitions and the proof of the result as Corollary \ref{coro:2} are given in Section \ref{sec:2}; for the moment we simply note that for $\rho>0$ we write 
$D_\rho=\{z\in \mathbb{C}:|z|<\rho\}$ for the open disk with radius $\rho$ centred at $0$. 

\begin{theorem*}
Let $T$ be an analytic full branch map on the interval $[-1,1]$,
whose inverse branches $\varphi_\ell, \ell=1,\ldots,d,$ extend analytically to
an open disk $D_R \subset \mathbb{C}$ with $\bigcup_\ell \varphi_\ell (D_R) \subseteq D_r$ for some $1 < r < R$, and assume $r/R < 1/\gamma$ with $\gamma = (1 + \sqrt 2)^2$. Then there exists a Hilbert space of holomorphic functions $\mathcal{H}$
with Banach space dual $\mathcal{H}'$, such that the Koopman operator $\mathcal{K} \colon \mathcal{H}' \to \mathcal{H}'$ is compact.

Moreover, for $M\geq N^2 R^N$ there exist enumerations of the eigenvalues of $\mathcal{K}$ and $(\hat H_N^{(M)})^{-1} \hat G_N^{(M)}$, $\lambda_k(\mathcal{K})$ and
$\lambda_k((\hat H_N^{(M)})^{-1} \hat G_N^{(M)})$ respectively, such that for each $k \in \mathbb{N}$, there are constants $C > 0$ and $b \in (0, 1)$ so that
\[
|\lambda_k((\hat H_N^{(M)})^{-1} \hat G_N^{(M)})-\lambda_k(\mathcal{K})|\leq C b^N \, .
\]
\end{theorem*}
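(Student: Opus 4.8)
The plan is to prove the bound by splitting the error into two independent contributions, a \emph{quadrature} error controlled by $M$ and a \emph{finite-section} (Galerkin) error controlled by $N$, and to combine them by a triangle inequality. To this end I would introduce the intermediate matrices obtained by letting $M\to\infty$: writing $H_{kn}=\frac12\int_{-1}^1 x^{k}x^{n}\,dx$ and $G_{kn}=\frac12\int_{-1}^1 T(x)^{k}x^{n}\,dx$, the node sums in \eqref{eq:1.2} are Riemann-type approximations of these integrals, so $\hat H^{(M)}_N\to H_N$ and $\hat G^{(M)}_N\to G_N$ as $M\to\infty$. Setting $A^{(M)}_N=(\hat H^{(M)}_N)^{-1}\hat G^{(M)}_N$ and $A_N=H_N^{-1}G_N$, I would prove separately that the eigenvalues of $A_N$ approximate those of $\mathcal K$ at rate $b^N$, and that the eigenvalues of $A^{(M)}_N$ approximate those of $A_N$ at a rate that, under $M\ge N^2R^N$, is negligible compared with $b^N$.

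For the quadrature step I would use analyticity. Each entry of $\hat G^{(M)}_N-G_N$ is the error of the equidistant quadrature applied to $x\mapsto T(x)^k x^n$, which extends holomorphically to $D_R$; the standard contour estimate for such sums bounds this error by the supremum of the integrand on $D_R$ times a geometrically small factor in $M$, and that supremum is controlled by $R^{\,k+n}$. Summing over the $N^2$ entries and invoking $M\ge N^2R^N$ makes $\|\hat G^{(M)}_N-G_N\|$ (and likewise $\|\hat H^{(M)}_N-H_N\|$) smaller than any prescribed multiple of $b^N$. Propagating this through the inverse requires a lower bound on the smallest singular value of $\hat H^{(M)}_N$; since $H_N$ is a Hilbert-type matrix its inverse grows only like a fixed exponential in $N$, which the condition on $M$ again absorbs, so that $\|A^{(M)}_N-A_N\|$ and hence the perturbation of each eigenvalue is $o(b^N)$.

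The substance of the argument is the finite-section step, for which I would relate $A_N$ to the compression of $\mathcal K$ acting on $\mathcal H'$. Expressed in the orthonormal monomial basis of $\mathcal H$, the operator $\mathcal K$ (equivalently its adjoint transfer operator on $\mathcal H$) has a matrix whose entries decay geometrically because $\mathcal K$ is compact with singular values governed by the ratio $r/R$. The EDMD problem, however, pairs monomials through the $L^2$ inner product rather than the inner product of $\mathcal H$, and the change between the two is implemented by the Gram matrix of monomials, whose conditioning grows like $\gamma^{N}$ with $\gamma=(1+\sqrt2)^2$ by the classical Hilbert-matrix asymptotics. The hypothesis $r/R<1/\gamma$ is exactly what guarantees that the product of the decaying tail $(r/R)^N$ with this conditioning factor is still geometrically small, so I would fix $b\in(0,1)$ with $(r/R)\gamma<b<1$. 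I would then deduce eigenvalue convergence not entrywise but through spectral determinants: showing that the characteristic determinants $\det(\lambda H_N-G_N)$ converge, after normalisation, locally uniformly to the Fredholm determinant of $\mathcal K$ on $\mathcal H'$, and applying Rouch\'e/Hurwitz to obtain matching enumerations of the zeros together with the rate $b^N$ for each fixed $k$.

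The main obstacle is this last step: controlling the finite-section error against the ill-conditioning of the monomial basis. The difficulty is twofold — first, quantifying the determinant convergence with an explicit geometric rate uniformly on compact subsets of $\mathbb C\setminus\{0\}$, which forces one to track the interplay between the $(r/R)^N$ decay from holomorphy and the $\gamma^N$ growth from the Hilbert-type matrix and is the sole reason the hypothesis $r/R<1/\gamma$ enters; and second, converting determinant convergence into a convergence statement for an \emph{enumeration} of eigenvalues with a rate that is uniform in the index $k$. By contrast the quadrature step, once the analytic error bound is in place, is essentially routine matrix perturbation theory.
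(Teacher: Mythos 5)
Your overall decomposition is the same as the paper's: a Galerkin (finite-section) error controlled by $N$, a collocation (quadrature) error controlled by $M$, the identification of the Hilbert-matrix conditioning $\gamma^N=(1+\sqrt2)^{2N}$ as the source of the hypothesis $r/R<1/\gamma$, and the intermediate ``infinitely many nodes'' matrices $H_N$, $G_N$. Two points in your plan do not survive scrutiny, however. First, your quadrature estimate is wrong: the integrand $x\mapsto T(x)^k x^n$ is only \emph{piecewise} analytic on $[-1,1]$ (a full branch map is not smooth across the $d-1$ branch points), and even for globally analytic non-periodic integrands an equidistant Riemann sum does not converge geometrically in $M$ --- the exponential convergence you invoke is a feature of the trapezoidal rule for \emph{periodic} analytic functions. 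The correct entrywise bound is algebraic, of order $(k+\ell)/M$ plus a term accounting for the cells containing branch points (the paper's Lemma~\ref{lem:5} and Proposition~\ref{lem:7}, giving $\|\hat H_N-\hat H_N^{(M)}\|_2, \|\hat G_N-\hat G_N^{(M)}\|_2 = O(N^2/M)$). This error is repairable: under $M\ge N^2R^N$ the algebraic bound gives $O(R^{-N})$, which after propagation through $(\hat H_N^{(M)})^{-1}$ (cost $\gamma^N$, via Wilf's bound on inverses of finite sections of the Hilbert matrix, together with a Neumann-series argument to transfer the bound from $H_N^{-1}$ to $(\hat H_N^{(M)})^{-1}$) and through the diagonal similarity relating the $L^2$ and $H^2(D_\rho)$ bases (cost $\rho^N$) yields a contribution of order $(\gamma\rho/R)^N$ --- the same order as the finite-section error, not $o(b^N)$ as you assert, but still geometrically small precisely because $r/R<1/\gamma$ permits a choice $\rho=\sqrt{rR/\gamma}\in(r,R)$.

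Second, the step you flag as the main obstacle --- passing from matrix convergence to eigenvalue convergence --- is where your route diverges from the paper's and where your plan is incomplete. You propose convergence of the characteristic determinants $\det(\lambda \hat H_N-\hat G_N)$ to a Fredholm determinant of $\mathcal K$ plus Rouch\'e; this could in principle be made to work (the transfer operator is trace class on $H^2(D_\rho)$, so the determinant exists), but you give no mechanism for the locally uniform determinant convergence, and the route is substantially harder than needed. The paper instead lifts $(\hat H_N^{(M)})^{-1}\hat G_N^{(M)}$, after conjugation by the diagonal matrix $\mathrm{diag}(\rho^n)$, to a finite-rank operator on $\ell^2(\mathbb N_0)$ and proves \emph{operator-norm} convergence to the matrix representation $L$ of the compact transfer operator $\mathcal L$ on $H^2(D_\rho)$, at rate $(\gamma r/R)^{N/2}$ (Theorem~\ref{thrm:1} and Corollary~\ref{coro:1}); exponential eigenvalue convergence for each fixed $k$ then follows from a standard spectral approximation theorem for norm-convergent sequences of finite-rank operators (Lemma~\ref{lem:14}, after Ahues--Largillier--Limaye), and the statement for $\mathcal K$ follows because the Banach adjoint has the same spectrum as $\mathcal L$. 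Note also that the theorem only asks for constants $C,b$ depending on $k$, so the uniformity in $k$ you worry about is not required. As it stands your proposal identifies the correct structural ingredients but neither establishes the quadrature bound correctly nor closes the eigenvalue-convergence step.
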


In short, for analytic full branch interval maps this theorem guarantees exponential
convergence of eigendata of the EDMD matrix constructed using $N$
monomials as observables and $M$ equidistant points in the phase space, to those of
the associated Koopman operator (defined on a suitable function space), if $M$ satisfies a lower bound in terms of $N$.

We aim to keep our account self-contained and therefore start with an exposition that can
be found elsewhere in the literature and restate some well-known facts.
In particular, we introduce in some detail analytic
full branch maps and suitable function spaces for the associated transfer operators in Section~\ref{sec:2.1}, and reiterate some basic features of collocation errors
in Section~\ref{sec:2.2}. The main technical approximation lemmas are presented in Section~\ref{sec:2.3} culminating in the proof of our main Theorem~\ref{thrm:1} and its corollaries in Section~\ref{sec:2.4}.
The given estimates are conservative, and a refined approach may allow to
relax some of the constraints, such as the constraint on the expansion rate of the map, or
the relation between the number of nodes and the number of observables. Therefore we also provide a numerical analysis of EDMD in the context of linear and nonlinear analytic full branch maps of the interval in Section~\ref{sec:3}, focusing on maps where
the spectrum of the transfer operator is known explicitly, and where we are able to compare EDMD results with the exact expected outcome. These computations indicate that one may indeed be able to derive improved estimates with different tools. Finally, in Section~\ref{sec:3.3} we return
to the conundrum described above for the choice of Fourier modes presented in the left panel of Figure~\ref{fig:1.1}.

\section{Error estimates for EDMD}\label{sec:2}

In this section we provide a detailed proof of the convergence
properties of EDMD for certain one-dimensional chaotic maps.
To keep our presentation self-contained we will include some well-known
details and classical facts.
Initially, we set up the spectral theory for the dynamical system
we consider, and then move on to provide a detailed account
of the proof of Theorem \ref{thrm:1} and of the corollaries.

\subsection{Analytic full branch maps and their transfer operators}\label{sec:2.1}

The following are the standing assumptions we are going to use
throughout our exposition. We consider an analytic
full branch map $T\colon I \to I$ on the interval $I=[-1,1]$. By that we mean
there exists a collection of closed intervals $\{I_{\ell}:1\leq \ell \leq d \}$
with disjoint interiors, such that $I = \bigcup_\ell I_\ell$,
and $T\vert_{\operatorname{int}(I_\ell)}$ is an analytic diffeomorphism with $\overline{T(I_{\ell})}=I$ for each $\ell$.
We denote by $\varphi_\ell \colon I \rightarrow I_{\ell}$ the analytic inverse
branches of the map $T$, and assume that for some $R > 1$, each $\varphi_\ell$ has an analytic
extension to an open disk $D_R \subset \mathbb{C}$ with radius $R$ centred at $0$. More precisely, we require that $\varphi_\ell \in H^\infty(D_R)$, where
\begin{equation*}
H^\infty(D_R) =\{ f\in \mbox{Hol}(D_R) : \sup_{z\in D_R}|f(z)|<\infty \}
\end{equation*}
denotes the usual Banach space of holomorphic functions with the norm
\begin{equation*}
\|f\|_{H^\infty(D_R)} = \sup_{z\in D_R}|f(z)| \, .
\end{equation*}
Finally, we assume there exists $1<r<R$ such that
\begin{equation*}
\bigcup_{\ell=1}^d \varphi_\ell(D_R) \subseteq D_r \, ,
\end{equation*}
which poses a condition on the expansion of
the map $T$.

We associate with the map $T$ the transfer operator $\mathcal{L} \colon L^1(I) \to L^1(I)$ given by
\begin{equation}\label{eq:2.1.3}
(\mathcal{L} f)(z)=
\sum_{\ell=1}^d \sigma_\ell \varphi_\ell'(z) f(\varphi_\ell(z)),
\end{equation}
where $\sigma_\ell=\mbox{sgn}(\varphi_\ell'(0))=\pm 1$ encodes
whether the respective branch of the map is increasing or decreasing.
This operator, in this setting also known as the Perron-Frobenius operator, connects to the action of the map via the duality relation
\begin{equation}\label{eq:2.1.3a}
\int_{-1}^1 g(T(x)) f(x) \, dx = \int_{-1}^1 g(x) (\mathcal{L} f)(x) \, dx \,  
\quad (f \in L^1(I), g\in L^\infty(I)).
\end{equation}

We will first show that the expression (\ref{eq:2.1.3}) also gives rise to a well-defined and in fact compact
operator when viewed on certain spaces of analytic functions. The main result of this section will be Proposition \ref{lem:4}, showing that this operator is well-approximated by finite-rank Taylor approximations.

For that purpose we introduce the standard Hardy-Hilbert space
\begin{equation*}
H^2(D_R) =\{ f\in \mbox{Hol}(D_R) :
\sup_{s<R} \int_0^{2\pi} |f(s e^{i t})|^2 \, dt  <\infty \}
\end{equation*}
with norm
\begin{equation*}
\|f\|_{H^2(D_R)} = \sup_{s<R} \sqrt{\frac{1}{2\pi} \int_0^{2\pi} |f(s e^{i t})|^2 \,dt} \, .
\end{equation*}

Given $f\in \mbox{Hol}(D_R)$ we can write  $f(z)=\sum_{n=0}^\infty f_n z^n$, where $f_n$ denotes the $n$-th Taylor coefficient of $f$. For fixed
$N\in \mathbb{N}_0$ we let
\begin{equation}\label{eq:2.1.6}
(\mathcal{P}_N f)(z)=\sum_{n=0}^{N-1} f_n z^n
\end{equation}
denote the usual Taylor projection. Later on we will be more specific about the domain and codomain of $\mathcal{P}_N$.
For later use we note that $f\in H^2(D_R)$ if and only if $\sum_{n=0}^\infty |f_n|^2R^{-2n}<\infty$, in which case
\[ \|f\|^2_{H^2(D_R)}=\sum_{n=0}^\infty |f_n|^2R^{-2n}.  \]

We start by recalling the relation between the two classes of spaces introduced above in the following two lemmas.

\begin{lemma}\label{lem:1}
Let $\rho \in (r,\infty)$. Then the embedding
$\mathcal{J}_1 \colon H^2(D_\rho) \rightarrow H^\infty(D_r)$ is compact
and $\|\mathcal{J}_1\|_{H^2(D_\rho) \rightarrow H^\infty(D_r)}\leq \rho/\sqrt{\rho^2-r^2}$.
Moreover,
\[
\| \mathcal{J}_1 - \mathcal{P}_N\|_{H^2(D_\rho)\rightarrow H^\infty(D_r)}
\leq \frac{\rho}{\sqrt{\rho^2-r^2}} \left(\frac{r}{\rho}\right)^N \qquad \text{ for } N\in \mathbb{N}_0 \, ,
\]
where $\mathcal{P}_N \colon H^2(D_\rho) \rightarrow H^\infty(D_r)$ denotes the Taylor
projection defined in (\ref{eq:2.1.6}).
\end{lemma}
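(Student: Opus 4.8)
The plan is to route everything through a single Cauchy--Schwarz estimate on Taylor coefficients, obtaining the norm bound and the compactness as the two extreme cases ($N=0$ and $N\to\infty$) of the ``Moreover'' estimate. First I would record the coefficient form of the two norms. Expanding $f(z)=\sum_{n\geq 0}f_n z^n$ in a Fourier series on the circle $|z|=s$ and letting $s\uparrow\rho$ gives $\|f\|_{H^2(D_\rho)}^2=\sum_{n=0}^\infty |f_n|^2\rho^{2n}$, so that the sequence $(|f_n|\rho^n)_n$ is square-summable with $\ell^2$-norm equal to $\|f\|_{H^2(D_\rho)}$. On the other hand, for any $g\in\mathrm{Hol}(D_r)$ with Taylor coefficients $g_n$ one has the crude termwise bound $\|g\|_{H^\infty(D_r)}=\sup_{|z|<r}|g(z)|\leq\sum_{n\geq 0}|g_n|r^n$, since $|z|^n\leq r^n$ on $D_r$ and the series converges absolutely there.

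The core computation is then the tail estimate. Fix $N\in\mathbb{N}_0$ and $f\in H^2(D_\rho)$; the function $f-\mathcal{P}_N f=\sum_{n\geq N}f_n z^n$ has vanishing coefficients below order $N$, so the termwise bound gives, for every $z\in D_r$,
\[
|(f-\mathcal{P}_N f)(z)|\leq\sum_{n=N}^\infty |f_n|r^n=\sum_{n=N}^\infty (|f_n|\rho^n)\left(\frac{r}{\rho}\right)^n.
\]
Applying Cauchy--Schwarz to the displayed pairing, the first factor is bounded by $\|f\|_{H^2(D_\rho)}$ and the second is a geometric tail,
\[
\left(\sum_{n=N}^\infty \left(\frac{r}{\rho}\right)^{2n}\right)^{1/2}=\left(\frac{r}{\rho}\right)^N\frac{\rho}{\sqrt{\rho^2-r^2}},
\]
which converges precisely because $r<\rho$. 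Taking the supremum over $z\in D_r$ yields the claimed operator-norm bound $\|\mathcal{J}_1-\mathcal{P}_N\|_{H^2(D_\rho)\to H^\infty(D_r)}\leq \frac{\rho}{\sqrt{\rho^2-r^2}}(r/\rho)^N$.

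Finally, the remaining two assertions drop out as special cases. Setting $N=0$, so that $\mathcal{P}_0=0$, gives $\|\mathcal{J}_1\|_{H^2(D_\rho)\to H^\infty(D_r)}\leq\rho/\sqrt{\rho^2-r^2}$, which in particular shows that $\mathcal{J}_1$ is bounded; and since each $\mathcal{P}_N$ has finite rank (at most $N$) while $(r/\rho)^N\to 0$ as $N\to\infty$, the embedding $\mathcal{J}_1$ is the operator-norm limit of finite-rank operators and is therefore compact. I do not expect a genuine obstacle here: the result is elementary, and the only points needing care are the bookkeeping in the Cauchy--Schwarz split---pairing the square-summable sequence $(|f_n|\rho^n)_n$ against $((r/\rho)^n)_n$, whose square is summable exactly under the hypothesis $\rho>r$---and the elementary observation that the sup norm over $D_r$ is dominated termwise by the absolutely convergent coefficient series.
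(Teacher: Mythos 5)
Your proposal is correct and follows essentially the same route as the paper's proof: both estimate the tail $\sum_{n\ge N}f_nz^n$ pointwise on $D_r$ by Cauchy--Schwarz against the geometric sequence $((r/\rho)^n)_n$, then obtain the operator-norm bound as the case $N=0$ and compactness from $\mathcal{J}_1$ being a uniform limit of the finite-rank operators $\mathcal{P}_N$. The only cosmetic difference is that the paper applies Cauchy--Schwarz directly to $\sum f_n\rho^n(z/\rho)^n$ and bounds $|z/\rho|\le r/\rho$ afterwards, whereas you take absolute values first; the resulting constant is identical.
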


\begin{proof}
Let $f\in H^2(D_\rho)$. Then we can write $f(z)=\sum_{n=0}^\infty f_n z^n$ for
$z\in D_\rho$. Moreover, for $z\in D_r$ we have
\begin{multline*}
\left|\left((\mathcal{J}_1-\mathcal{P}_N)f\right)(z)\right|^2 =\left|\sum_{n=N}^\infty f_n z^n\right|^2
=\left|\sum_{n=N}^\infty f_n \rho^n \left(\frac{z}{\rho}\right)^n\right|^2
\\
\leq \sum_{n=N}^\infty |f_n|^2 \rho^{2n} \sum_{n=N}^\infty \left|\frac{z}{\rho}\right|^{2n}
\leq \|f\|^2_{H^2(D_\rho)} \left(\frac{r}{\rho}\right)^{2N} \frac{1}{1-(r/\rho)^2} \, .
\end{multline*}
Thus
\[
\|(\mathcal{J}_1-\mathcal{P}_N)f\|_{H^\infty(D_r)} \leq \|f\|_{H^2(D_\rho)} \frac{\rho}{\sqrt{\rho^2-r^2}}
\left(\frac{r}{\rho}\right)^N  \, .
\]
The estimate for the operator norm follows by taking $N=0$ in the above. Finally, $\mathcal{J}_1$ is compact, since it is a uniform limit
of finite-rank operators.
\end{proof}

\begin{lemma}\label{lem:2}
Let $\rho \in (0,R)$. Then the embedding
$\mathcal{J}_2 \colon H^\infty(D_R) \rightarrow H^2(D_\rho)$ is compact and
$\|\mathcal{J}_2\|_{H^\infty(D_R) \rightarrow H^2(D_\rho)}\leq 1$.
Moreover,
\[
\| \mathcal{J}_2 - \mathcal{P}_N\|_{H^\infty(D_R)\rightarrow H^2(D_\rho)}
\leq \left(\frac{\rho}{R}\right)^N \qquad \text{ for } N\in \mathbb{N}_0 \, ,
\]
where $\mathcal{P}_N \colon H^\infty(D_R) \rightarrow H^2(D_\rho)$ denotes the Taylor
projection  defined in (\ref{eq:2.1.6}).
\end{lemma}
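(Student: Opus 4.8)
The plan is to prove this by mirroring the argument for Lemma~\ref{lem:1}, working throughout at the level of Taylor coefficients; indeed Lemma~\ref{lem:2} is essentially the dual statement, so the same coefficient bookkeeping should apply. Fix $f\in H^\infty(D_R)$ and write $f(z)=\sum_{n=0}^\infty f_n z^n$. The one ingredient I need beyond the coefficient characterisation of the $H^2$-norm, namely $\|g\|_{H^2(D_\rho)}^2=\sum_{n=0}^\infty |g_n|^2\rho^{2n}$, is the contractive embedding $H^\infty(D_R)\hookrightarrow H^2(D_R)$: for every $s<R$ one has $\tfrac{1}{2\pi}\int_0^{2\pi}|f(se^{it})|^2\,dt=\sum_{n=0}^\infty|f_n|^2 s^{2n}\leq\|f\|_{H^\infty(D_R)}^2$, and letting $s\uparrow R$ by monotone convergence yields $\sum_{n=0}^\infty|f_n|^2 R^{2n}\leq\|f\|_{H^\infty(D_R)}^2$. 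This is the only place where the sup-norm on the larger disk is converted into coefficient information, and I regard it as the crux; everything else is routine.

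With that in hand I would establish the difference estimate. Since $((\mathcal{J}_2-\mathcal{P}_N)f)(z)=\sum_{n=N}^\infty f_n z^n$, the coefficient formula for the $H^2(D_\rho)$-norm gives $\|(\mathcal{J}_2-\mathcal{P}_N)f\|_{H^2(D_\rho)}^2=\sum_{n=N}^\infty|f_n|^2\rho^{2n}$. The key manipulation is to write $\rho^{2n}=(\rho/R)^{2n}R^{2n}$ and observe that $(\rho/R)^{2n}\leq(\rho/R)^{2N}$ for every $n\geq N$, since $\rho/R<1$; pulling this factor out of the tail and then extending the truncated sum to the full sum leaves
\[
\sum_{n=N}^\infty|f_n|^2\rho^{2n}\leq(\rho/R)^{2N}\sum_{n=0}^\infty|f_n|^2 R^{2n}\leq(\rho/R)^{2N}\|f\|_{H^\infty(D_R)}^2,
\]
where the final inequality is the embedding from the first paragraph. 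Taking square roots and supremising over the unit ball of $H^\infty(D_R)$ gives $\|\mathcal{J}_2-\mathcal{P}_N\|\leq(\rho/R)^N$, as claimed.

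Finally, the operator-norm bound $\|\mathcal{J}_2\|\leq1$ follows at once by specialising the difference estimate to $N=0$, since $\mathcal{P}_0=0$ and $(\rho/R)^0=1$, exactly as in Lemma~\ref{lem:1}. Compactness is then a free consequence: each $\mathcal{P}_N$ is finite-rank, and the difference estimate shows $\mathcal{P}_N\to\mathcal{J}_2$ in operator norm as $N\to\infty$, so $\mathcal{J}_2$ is a uniform limit of finite-rank operators and hence compact. I do not anticipate any genuine obstacle; the only points that demand a little care are the passage $s\uparrow R$ needed to reach the boundary weight $R^{2n}$ from the open-disk sup-norm, and the elementary but essential use of $\rho/R<1$ that allows the geometric factor $(\rho/R)^{2N}$ to be extracted from the tail sum.
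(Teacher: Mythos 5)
Your proof is correct and follows essentially the same route as the paper: compute the $H^2(D_\rho)$-norm of the tail $\sum_{n\geq N}f_nz^n$ coefficientwise, extract the factor $(\rho/R)^{2N}$, and bound the remaining sum by $\|f\|^2_{H^\infty(D_R)}$ via the contractive embedding $H^\infty(D_R)\hookrightarrow H^2(D_R)$, then obtain the norm bound by setting $N=0$ and compactness as a uniform limit of finite-rank operators. The only difference is that you spell out the monotone-convergence justification of $\sum_n|f_n|^2R^{2n}\leq\|f\|^2_{H^\infty(D_R)}$, which the paper uses without comment.
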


\begin{proof}
Let $f\in H^\infty(D_R)$. Then we can write $f(z)=\sum_{n=0}^\infty f_n z^n$ for
$z\in D_R$. For $N \in \mathbb{N}_0$, we have
\begin{multline*}
\|(\mathcal{J}_2 -\mathcal{P}_N)f\|^2_{H^2(D_\rho)} =
\sup_{s<\rho} \frac{1}{2\pi} \int_0^{2\pi} \left|\sum_{n=N}^\infty f_n s^n e^{int} \right|^2 \, dt
= \sup_{s<\rho} \sum_{n=N}^\infty |f_n|^2 s^{2n}
= \sum_{n=N}^\infty |f_n|^2 R^{2n} \left(\frac{\rho}{R}\right)^{2n} \\
\leq \left(\frac{\rho}{R}\right)^{2N} \sum_{n=R}^\infty |f_n|^2 R^{2n}
\leq
\left(\frac{\rho}{R}\right)^{2N} \|f\|^2_{H^2(D_R)} \leq
\left(\frac{\rho}{R}\right)^{2N} \|f\|^2_{H^\infty(D_R)} \, .
\end{multline*}
As in the proof of Lemma~\ref{lem:1},
the estimate for the operator norm follows by taking $N=0$ in the above,
and $\mathcal{J}_2$ is compact since it is a uniform limit
of finite-rank operators.
\end{proof}

We now turn to the transfer operator defined in (\ref{eq:2.1.3}). Our ultimate aim is to show that it is compact and 
is well-approximated by operators of finite rank. We start with the following simple observation.

\begin{lemma}\label{lem:3}
The transfer operator (\ref{eq:2.1.3}) extends to a bounded
operator $\tilde{\mathcal{L}} \colon H^\infty(D_r) \rightarrow H^\infty(D_R)$
with
\[
\| \tilde{\mathcal{L}} \|_{H^\infty(D_r) \rightarrow H^\infty(D_R)} \leq \sup_{z\in D_R}
\sum_{\ell=1}^d |\varphi'_\ell(z)|  \, .
\]
\end{lemma}

\begin{proof}
Let $f\in H^\infty(D_r)$ and $z \in D_R$. Then
\[
|(\tilde{\mathcal{L}} f)(z)| \leq \sum_{\ell=1}^d |\varphi'_\ell(z)| |f(\varphi_\ell(z))|
\leq \sum_{\ell=1}^d |\varphi'_\ell(z)| \|f\|_{H^\infty(D_r)}
\]
so
\[
\| \tilde{\mathcal{L}} f \|_{H^\infty(D_R)} \leq \sup_{z \in D_R}
\sum_{\ell=1}^d |\varphi'_\ell(z)| \|f\|_{H^\infty(D_r)} \,
\]
and the assertion follows.
\end{proof}
For variants of this result for transfer operators arising from maps with infinitely many branches, see \cite{BJLMS_07}.

Now let $\mathcal{J}_1 \colon H^2(D_\rho) \rightarrow H^\infty(D_r)$
and
$\mathcal{J}_2 \colon H^\infty(D_R) \rightarrow H^2(D_\rho)$
denote the canonical
embeddings and $\tilde{\mathcal{L}}$ the transfer operator as an operator
$\tilde{\mathcal{L}} \colon H^\infty(D_r) \rightarrow H^\infty(D_R)$. Then the
transfer operator $\mathcal{L}$ viewed as an operator
$\mathcal{L} \colon H^2(D_\rho) \rightarrow H^2(D_\rho)$ factorises as
$\mathcal{L}=\mathcal{J}_2 \tilde{\mathcal{L}} \mathcal{J}_1$.
Using this factorisation we are now able to estimate the accuracy of
particular finite-rank approximations of the transfer operator.

\begin{proposition}\label{lem:4}
Let $\rho\in(r,R)$. Then, for $N\in \mathbb{N}_0$ we have 
\begin{align*}
\|\mathcal{L}-\mathcal{P}_N \mathcal{L}\|_{H^2(D_\rho)\rightarrow H^2(D_\rho)}
& \leq C
\left(\frac{\rho}{R}\right)^N, \\
\|\mathcal{L}-\mathcal{L} \mathcal{P}_N\|_{H^2(D_\rho)\rightarrow H^2(D_\rho)}
& \leq C
\left(\frac{r}{\rho}\right)^N,
\end{align*}
where
\[
C=\frac{\rho}{\sqrt{\rho^2-r^2}} \sup_{z\in D_R} \sum_{\ell=1}^d | \varphi'_\ell(z)|
\]
and $\mathcal{P}_N \colon H^2(D_\rho)\rightarrow H^2(D_\rho)$ denotes the Taylor projection.
In particular
\[
\| \mathcal{L}- \mathcal{P}_N \mathcal{L} \mathcal{P}_N \|_{H^2(D_\rho)\rightarrow H^2(D_\rho)}
\leq C \left( \left(\frac{\rho}{R}\right)^N + \left(\frac{r}{\rho}\right)^N \right) \, 
\]
for $N\in\mathbb{N}_0$. Choosing $\rho=\sqrt{R r}$ optimises the above convergence rate to
\[
\| \mathcal{L}- \mathcal{P}_N \mathcal{L} \mathcal{P}_N \|_{H^2(D_\rho)\rightarrow H^2(D_\rho)}
\leq 2 C \left(\frac{r}{R}\right)^{N/2} \, .
\]
\end{proposition}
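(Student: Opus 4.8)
The plan is to exploit the factorisation $\mathcal{L} = \mathcal{J}_2 \tilde{\mathcal{L}} \mathcal{J}_1$ established above, together with the three norm estimates from Lemmas~\ref{lem:1}--\ref{lem:3}, and to reduce both displayed inequalities to a single application of submultiplicativity of the operator norm. The device that makes this work is the observation that the Taylor projection merely truncates the sequence of Taylor coefficients and hence is insensitive to the particular space on which it is viewed. Concretely, writing $\mathcal{P}_N$ for the truncation to the first $N$ coefficients, one has $\mathcal{P}_N \mathcal{J}_2 = \mathcal{P}_N$ as operators $H^\infty(D_R) \to H^2(D_\rho)$ and $\mathcal{J}_1 \mathcal{P}_N = \mathcal{P}_N$ as operators $H^2(D_\rho) \to H^\infty(D_r)$, where on the right-hand sides $\mathcal{P}_N$ denotes the mixed-space Taylor projections appearing in Lemmas~\ref{lem:2} and~\ref{lem:1} respectively. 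Checking these two identities carefully is really the only conceptual step; everything else is bookkeeping.

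For the first estimate I would write $\mathcal{L} - \mathcal{P}_N \mathcal{L} = (I - \mathcal{P}_N)\mathcal{J}_2 \tilde{\mathcal{L}}\mathcal{J}_1 = (\mathcal{J}_2 - \mathcal{P}_N)\tilde{\mathcal{L}}\mathcal{J}_1$, using $\mathcal{P}_N \mathcal{J}_2 = \mathcal{P}_N$ to pass to the mixed-space projection. Submultiplicativity then gives $\|\mathcal{L} - \mathcal{P}_N \mathcal{L}\| \leq \|\mathcal{J}_2 - \mathcal{P}_N\|\,\|\tilde{\mathcal{L}}\|\,\|\mathcal{J}_1\|$, and plugging in the bounds $(\rho/R)^N$ from Lemma~\ref{lem:2}, $\sup_{z \in D_R}\sum_\ell |\varphi'_\ell(z)|$ from Lemma~\ref{lem:3}, and $\rho/\sqrt{\rho^2 - r^2}$ from Lemma~\ref{lem:1} yields exactly $C (\rho/R)^N$. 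Symmetrically, for the second estimate I would factor on the other side, $\mathcal{L} - \mathcal{L}\mathcal{P}_N = \mathcal{J}_2 \tilde{\mathcal{L}}\mathcal{J}_1(I - \mathcal{P}_N) = \mathcal{J}_2 \tilde{\mathcal{L}}(\mathcal{J}_1 - \mathcal{P}_N)$, and bound $\|\mathcal{J}_2\| \leq 1$, $\|\tilde{\mathcal{L}}\|$ as before, and $\|\mathcal{J}_1 - \mathcal{P}_N\| \leq \frac{\rho}{\sqrt{\rho^2 - r^2}}(r/\rho)^N$ from Lemma~\ref{lem:1}, giving $C(r/\rho)^N$.

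To combine these into the two-sided bound I would use the telescoping identity $\mathcal{L} - \mathcal{P}_N \mathcal{L}\mathcal{P}_N = (\mathcal{L} - \mathcal{P}_N \mathcal{L}) + \mathcal{P}_N(\mathcal{L} - \mathcal{L}\mathcal{P}_N)$ together with the triangle inequality, noting that the Taylor projection on $H^2(D_\rho)$ is an orthogonal projection and hence $\|\mathcal{P}_N\| \leq 1$; this immediately produces $C\big((\rho/R)^N + (r/\rho)^N\big)$. Finally, setting $\rho = \sqrt{Rr}$ equalises the two geometric factors, since then $\rho/R = r/\rho = \sqrt{r/R}$, and the bound collapses to $2C(r/R)^{N/2}$. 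I expect no serious obstacle here: the whole argument is a clean composition of the earlier lemmas, and the only point demanding genuine care is the identification of $\mathcal{P}_N \mathcal{J}_2$ and $\mathcal{J}_1 \mathcal{P}_N$ with the mixed-space projections, which is precisely what licenses the direct use of the approximation estimates from Lemmas~\ref{lem:1} and~\ref{lem:2}.
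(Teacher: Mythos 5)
Your argument is correct and is essentially identical to the paper's proof: the same factorisation $\mathcal{L}=\mathcal{J}_2\tilde{\mathcal{L}}\mathcal{J}_1$, the same identification of $\mathcal{P}_N\mathcal{J}_2$ and $\mathcal{J}_1\mathcal{P}_N$ with the mixed-space Taylor projections so that Lemmas~\ref{lem:1}--\ref{lem:3} apply via submultiplicativity, and the same telescoping identity with $\|\mathcal{P}_N\|=1$ for the combined bound. No issues.
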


\begin{proof}
Using $\mathcal{L} = \mathcal{J}_2 \tilde{\mathcal{L}} \mathcal{J}_1$,
by Lemmas \ref{lem:1}, \ref{lem:2}, and \ref{lem:3} we have
\begin{align*}
& \hphantom{\, = \ }
\|\mathcal{L}-\mathcal{P}_N \mathcal{L}\|_{H^2(D_\rho)\rightarrow H^2(D_\rho)}\\
& = \|(\mathcal{J}_2 -\mathcal{P}_N \mathcal{J}_2)\tilde{\mathcal{L}} \mathcal{J}_1\|_{H^2(D_\rho)\rightarrow H^2(D_\rho)}\\
& \leq \|\mathcal{J}_2-\mathcal{P}_N\|_{H^\infty(D_R)\rightarrow H^2(D_\rho)} \cdot
\|\tilde{\mathcal{L}}\|_{H^\infty(D_r)\rightarrow H^\infty(D_R)} \cdot
\|\mathcal{J}_1\|_{H^2(D_\rho)\rightarrow H^\infty(D_r)}\\
& \leq \left(\frac{\rho}{R}\right)^N \sup_{z\in D_R}\sum_{\ell=1}^d | \varphi'_\ell(z)| \frac{\rho}{\sqrt{\rho^2-r^2}} \, ,
\end{align*}
and, similarly,
\begin{align*}
& \hphantom{\, = \ }
\|\mathcal{L} -\mathcal{L} \mathcal{P}_N\|_{H^2(D_\rho)\rightarrow H^2(D_\rho)}\\ & =
\|\mathcal{J}_2 \tilde{\mathcal{L}} (\mathcal{J}_1-\mathcal{J}_1 \mathcal{P}_N)\|_{H^2(D_\rho)\rightarrow H^2(D_\rho)}\\
& \leq  \|\mathcal{J}_2\|_{H^\infty(D_R)\rightarrow H^2(D_\rho)} \cdot
\|\tilde{\mathcal{L}}\|_{H^\infty(D_r)\rightarrow H^\infty(D_R)}\cdot
\|\mathcal{J}_1-\mathcal{P}_N\|_{H^2(D_\rho)\rightarrow H^\infty(D_r)}\\
& \leq  \left(\frac{r}{\rho}\right)^N \sup_{z\in D_R}\sum_{\ell=1}^d | \varphi'_\ell(z)| \frac{\rho}{\sqrt{\rho^2-r^2}} \, .
\end{align*}

The rest follows by observing that
\[
\mathcal{L}-\mathcal{P}_N \mathcal{L} \mathcal{P}_N
=(\mathcal{L}-\mathcal{P}_N \mathcal{L}) + \mathcal{P}_N(\mathcal{L}-\mathcal{L} \mathcal{P}_N)
\]
together with the fact that $\|\mathcal{P}_N\|_{H^2(D_\rho)\rightarrow H^2(D_\rho)}=1$, which follows
since $\mathcal{P}_N$ is a self-adjoint projection on $H^2(D_\rho)$.
\end{proof}

\subsection{Collocation errors for EDMD}\label{sec:2.2}

In this section we leverage several classical facts to prove
Proposition \ref{lem:7},
providing basic bounds on the difference between the EDMD matrices in Equation (\ref{eq:1.2}) with a finite set of equidistant nodes, 
and their formal infinite-data limits.
The results are stated for
expanding analytic full branch maps, but in fact the analyticity assumption can be weakened to piecewise $C^1$.

Let $M \in \mathbb{N}$. For $m\in\{0,1,\ldots, M-1\}$ and
$0\leq\delta\leq 2/M$ let  $x_m=-1+\delta+2 m/M$ denote a
collection of equidistant nodes in $I=[-1,1]$. Given $N\in \mathbb{N}$ and
 $k,\ell\in \{0,\ldots,N-1\}$ let
\begin{equation}\label{eq:2.2.1}
\left(H_N^{(M)}\right)_{k\ell}= \frac{1}{M}\sum_{m=0}^{M-1} x_m^k x_m^\ell,\quad
\left(G_N^{(M)}\right)_{k\ell}=\frac{1}{M} \sum_{m=0}^{M-1} \left(T(x_m)\right)^k x_m^\ell,
\end{equation}
and define the $N\times N$ square matrices $\hat{H}_N^{(M)}$ and
$\hat{G}_N^{(M)}$ by $(\hat{H}_N^{(M)})_{k\ell}=(H_N^{(M)})_{k\ell}$ and
$(\hat{G}_N^{(M)})_{k\ell}=(G_N^{(M)})_{k\ell}$. These matrices
determine EDMD for $M$ equidistant nodes and $N$ observables, where the observables are given by
monomials (see Equation (\ref{eq:1.2})).

In order to study the impact of the number of nodes we also introduce for $k,\ell \in \mathbb{N}_0$
\begin{equation}\label{eq:2.2.2}
H_{k\ell} =\frac{1}{2} \int_{-1}^1 x^k x^\ell \, dx, \quad
G_{k\ell}=\frac{1}{2} \int_{-1}^1 \left(T(x)\right)^m x^\ell \, dx
\end{equation}
and define the two $N\times N$ square matrices $\hat{H}_N$ and $\hat{G}_N$ by
$\left(\hat{H}_N\right)_{k\ell}=H_{k\ell}$ and
$\left(\hat{G}_N\right)_{k\ell}=G_{k\ell}$ for $k,\ell\in \{0,\ldots, N-1\}$.
These matrices may be viewed as a formal $M\rightarrow \infty$ limit of the
finite sums occurring in EDMD and we shall sometimes refer to these objects as EDMD matrices with `infinitely many nodes'.
The following lemmas provide the
collocation errors of the expressions just introduced.

We begin by recalling the following standard bound for the error in the Riemann integral.

\begin{lemma}\label{lem:5}
Let $f \colon [a,b]\rightarrow \mathbb{R}$ be piecewise $C^1$, that is, $f$ is continuously
differentiable apart from a finite set of points
$S=\{s_1,s_2,\ldots, s_d\}\subset [a,b]$ with $s_1<s_2<\ldots<s_d$, and $f$ has an extension
to a $C^1$ function on $[s_{\ell-1},s_\ell]$ for $\ell \in \{1,\ldots, d+1\}$ where we set $s_0=a$ and $s_{d+1}=b$. Given $M\in\mathbb{N}$, define $h=(b-a)/M$ and
$x_k=a+k h$ for $k\in \{0,\ldots, M\}$. For any $t_k\in [x_{k-1},x_k]$ and $k \in \{1,\ldots, M\}$ we have
\[
\left| \int_a^b f(x) \, dx -\sum_{k=1}^M f(t_k) h\right| \leq \frac{1}{2} \| f'\| \frac{(b-a)^2}{M}
+2 d \| f\| \frac{b-a}{M}
\]
where
\[
\| f'\|  =\sup_{x\in[a,b]\backslash S} |f'(x)|, \quad
\| f \|=\sup_{x\in [a,b]}|f(x)| \, .
\]
\end{lemma}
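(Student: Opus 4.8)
The plan is to compare the integral and the Riemann sum subinterval by subinterval, writing
\[
\int_a^b f(x)\,dx - \sum_{k=1}^M f(t_k) h = \sum_{k=1}^M \left( \int_{x_{k-1}}^{x_k} f(x)\,dx - f(t_k) h \right) = \sum_{k=1}^M \int_{x_{k-1}}^{x_k} \bigl( f(x) - f(t_k) \bigr)\,dx,
\]
and then bounding each summand separately. The subintervals $[x_{k-1}, x_k]$ fall naturally into two classes: those on whose interior $f$ is $C^1$, which I call \emph{good}, and those whose interior contains at least one of the singular points $s_1, \ldots, s_d$, which I call \emph{bad}. Since the $s_j$ are distinct and every bad subinterval accounts for at least one of them, there are at most $d$ bad subintervals; a singular point coinciding with a grid node $x_k$ causes no difficulty, because the piecewise-$C^1$ hypothesis supplies $C^1$ extensions on both of the adjacent closed subintervals, so neither is counted as bad.

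On a good subinterval I would estimate via the fundamental theorem of calculus: for $x, t_k \in [x_{k-1}, x_k]$ one has $f(x) - f(t_k) = \int_{t_k}^x f'(u)\,du$, hence $|f(x) - f(t_k)| \leq \|f'\|\, |x - t_k|$. Integrating and using that $\int_{x_{k-1}}^{x_k} |x - t_k|\,dx \leq h^2/2$ for every choice of $t_k$ in the subinterval (the maximum being attained when $t_k$ is an endpoint), each good summand is bounded by $\tfrac{1}{2}\|f'\| h^2$. Summing over the at most $M$ good subintervals and substituting $h = (b-a)/M$ produces the first term $\tfrac{1}{2}\|f'\|(b-a)^2/M$.

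On a bad subinterval the derivative is uncontrolled, so I would fall back on the crude estimate
\[
\left| \int_{x_{k-1}}^{x_k} f(x)\,dx - f(t_k) h \right| \leq \int_{x_{k-1}}^{x_k} |f(x)|\,dx + |f(t_k)|\, h \leq 2 \|f\| h.
\]
Summing over the at most $d$ bad subintervals yields $2 d \|f\|(b-a)/M$, and adding the two contributions gives the claimed bound. The only point that requires genuine care is the bookkeeping for the bad subintervals — arguing correctly that there are at most $d$ of them even when several $s_j$ cluster inside one subinterval or land exactly on grid nodes — together with tracking the sharp constant $1/2$ in the good-interval estimate; the remaining steps are routine.
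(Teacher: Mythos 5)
Your proposal is correct and follows essentially the same route as the paper's proof: a subinterval-by-subinterval decomposition, the mean-value/FTC bound $\tfrac{1}{2}\|f'\|h^2$ on subintervals whose interior misses $S$, the crude bound $2\|f\|h$ on the at most $d$ remaining ones, and summation. The paper merely packages the case distinction via an indicator $\chi_k$ rather than your good/bad terminology.
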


\begin{proof}
We start by showing that
\begin{equation}\label{eq:2.2.3}
\left|\int_{x_{k-1}}^{x_k} f(x) \, dx - f(t_k) h\right| \leq \frac{1}{2} \| f'\| h^2 + 2\chi_k \| f\|  h
\end{equation}
where
\[
\chi_k =\left\{\begin{array}{rcl}
1 & \mbox{ if } & (x_{k-1},x_k) \cap S \neq \emptyset\\
0 & \mbox{ if } & (x_{k-1},x_k) \cap S = \emptyset
\end{array} \right. \, .
\]
In order to see this, note that $(x_{k-1},x_k) \cap S$ is either empty or finite. If it is empty, then
\begin{multline*}
\left|\int_{x_{k-1}}^{x_k} f(x) \, dx - f(t_k) h\right| =
\left| \int_{x_{k-1}}^{x_k} (f(x)-f(t_k)) \, dx \right| \\
\leq \int_{x_{k-1}}^{x_k} | f(x)-f(t_k)| \, dx \leq \| f'\| \int_{x_{k-1}}^{x_k}|x-t_k| \, dx
\leq \frac{1}{2} h^2 \| f'\|
\end{multline*}
by the Mean Value Theorem, and (\ref{eq:2.2.3}) follows in this case.
If $(x_{k-1},x_k) \cap S \neq \emptyset$, then we use
\[
\left|\int_{x_{k-1}}^{x_k} f(x) \, dx - f(t_k) h\right| \leq 2 \|f\| h\,,
\]
which proves (\ref{eq:2.2.3}) in this case.
Now, using (\ref{eq:2.2.3}) we have
\begin{multline*}
 \left| \int_a^b f(x) \, dx -\sum_{k=1}^M f(t_k) h\right|
= \left| \sum_{k=1}^M \left( \int_{x_{k-1}}^{x_k} f(x) \, dx - f(t_k) h\right) \right|\\
\leq \frac{M}{2} \|f'\| h^2 + \sum_{k=1}^M \chi_k \cdot 2 \|f\| h \leq
\frac{1}{2}\|f'\| \frac{(b-a)^2}{M} + 2 d \|f \| \frac{b-a}{M} \, .
 \qedhere
\end{multline*}
\end{proof}

Our subsequent estimates for the matrix norms will rely on a famous
lemma of Schur, the short proof of which we recall for the convenience of the reader.

\begin{lemma}\label{lem:6}
Let $A\in \mathbb{C}^{N\times N}$ be a complex $N\times N$ matrix and write
\[
R=\sum_{m=1}^N \max_{1\leq n\leq N} |A_{mn}|, \quad
C=\sum_{n=1}^N \max_{1\leq m\leq N} |A_{mn}| \, .
\]
Then the spectral norm $\|A\|_2$ of $A$, that is, the matrix norm induced by the Euclidean norm, satisfies
$\| A\|_2 \leq \sqrt{CR}$.
\end{lemma}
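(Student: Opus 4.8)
The plan is to bound the spectral norm through the bilinear pairing
\[
\|A\|_2 = \sup_{\|x\|_2=\|y\|_2=1} |\langle Ax, y\rangle|,
\]
where $\langle\cdot,\cdot\rangle$ is the standard inner product on $\mathbb{C}^N$. For unit vectors $x=(x_n)$ and $y=(y_m)$, the triangle inequality immediately gives
\[
|\langle Ax, y\rangle| = \Bigl| \sum_{m,n} A_{mn} x_n \overline{y_m} \Bigr| \le \sum_{m,n} |A_{mn}|\,|x_n|\,|y_m|.
\]
The key device is to split each weight symmetrically as $|A_{mn}| = |A_{mn}|^{1/2}|A_{mn}|^{1/2}$ and apply Cauchy--Schwarz to the two resulting factors $|A_{mn}|^{1/2}|x_n|$ and $|A_{mn}|^{1/2}|y_m|$, yielding
\[
\sum_{m,n} |A_{mn}|\,|x_n|\,|y_m| \le \Bigl(\sum_{m,n} |A_{mn}|\,|x_n|^2\Bigr)^{1/2} \Bigl(\sum_{m,n} |A_{mn}|\,|y_m|^2\Bigr)^{1/2}.
\]

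Next I would estimate the two factors by interchanging the order of summation. Summing over $m$ first in the left factor produces the column sums $\sum_m |A_{mn}|$, while summing over $n$ first in the right factor produces the row sums $\sum_n |A_{mn}|$. The crucial observation---and the only place where the particular definitions of $R$ and $C$ enter---is that every column sum is dominated by $R$ and every row sum by $C$: for any fixed $n$,
\[
\sum_{m=1}^N |A_{mn}| \le \sum_{m=1}^N \max_{1\le j\le N} |A_{mj}| = R,
\]
and symmetrically $\sum_{n=1}^N |A_{mn}| \le C$ for any fixed $m$. Substituting these bounds and using $\|x\|_2=\|y\|_2=1$ collapses the two factors to $\sqrt{R}$ and $\sqrt{C}$, so that $|\langle Ax, y\rangle| \le \sqrt{CR}$, whence $\|A\|_2 \le \sqrt{CR}$ after taking the supremum.

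I do not anticipate a genuine obstacle, the argument being a compact variant of the classical Schur test. The one subtlety worth flagging is the bookkeeping in the displayed bounds above: $R$, a sum of row-wise maxima, is what controls the \emph{column} sums, while $C$, a sum of column-wise maxima, controls the \emph{row} sums---the roles are swapped relative to a naive reading. Apart from this, everything is the standard Cauchy--Schwarz manipulation, and no further hypotheses on $A$ are needed.
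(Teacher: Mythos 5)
Your proof is correct and follows essentially the same route as the paper's: the symmetric split $|A_{mn}|=|A_{mn}|^{1/2}|A_{mn}|^{1/2}$, Cauchy--Schwarz, and then bounding the column sums by $R$ and the row sums by $C$. The only cosmetic difference is that you conclude via the variational characterisation $\|A\|_2=\sup_{\|x\|_2=\|y\|_2=1}|\langle Ax,y\rangle|$, whereas the paper sets $y=Ax$ and cancels a factor of $\|Ax\|_2^2$; both are standard and equivalent.
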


\begin{proof}
Let $x\in\mathbb{C}^N$ and $y\in \mathbb{C}^N$. Then by the Cauchy-Schwarz inequality
\begin{align*}
\left|\sum_{k,\ell=1}^N \bar{y}_k  A_{k\ell} x_\ell \right|^2
&\leq \left(\sum_{k,\ell=1}^N |y_k||A_{k\ell}|^{1/2} \cdot |A_{k\ell}|^{1/2} |x_\ell|\right)^2 \\
&\leq \sum_{k,\ell=1}^N  |y_k|^2 |A_{k\ell}| \cdot \sum_{k,\ell=1}^N |A_{k\ell}| |x_\ell|^2
\leq (\|y\|_2)^2 C \cdot R (\| x\|_2)^2 \, .
\end{align*}
Setting $y=Ax$ we obtain $(\|A x\|_2)^4\leq C R (\|x\|_2)^2 (\| A x\|_2)^2$, from which the assertion follows.
\end{proof}

We are now able to bound the collocation error caused by taking a finite
number of nodes.

\begin{proposition}\label{lem:7}
Denote by $T$ an analytic full branch map on $I=[-1,1]$ with $d$ branches,
and let $S$ be the set of the $d-1$ `critical points' of the map $T$.
For $k,\ell \in \{0,\ldots,N-1\}$ we have
\begin{align*}
\left|\left(\hat{H}_N-\hat{H}_N^{(M)}  \right)_{k\ell}\right| &\leq \frac{k+\ell}{M} \, , \\
\left|\left(\hat{G}_N-\hat{G}_N^{(M)}  \right)_{k\ell}\right| &\leq \max(\|T'\|,2(d-1)) \frac{k+l+1}{M}  \, ,
\end{align*}
where $\|T'\|=\sup_{x\in[-1,1]\backslash S}|T'(x)|$.

Moreover
\begin{align*}
\left\| \hat{H}_N-\hat{H}_N^{(M)}\right\|_2 &\leq \frac{3}{2} \frac{N^2}{M} \, ,\\
\left\| \hat{G}_N-\hat{G}_N^{(M)}\right\|_2 &\leq \frac{3}{2} \max(\|T'\|,2(d-1)) \frac{N^2}{M} \, .
\end{align*}
\end{proposition}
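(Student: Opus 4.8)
The plan is to reduce each matrix entry to a one-dimensional quadrature error, estimate that error with Lemma~\ref{lem:5}, and then convert the resulting entrywise bounds into spectral-norm bounds using Schur's Lemma~\ref{lem:6}. The starting observation is that, for fixed $k,\ell$, the two entries $(\hat H_N)_{k\ell}$ and $(\hat H_N^{(M)})_{k\ell}$ are the integral and the equidistant Riemann sum of the single function $f(x)=x^{k+\ell}$, while $(\hat G_N)_{k\ell}$ and $(\hat G_N^{(M)})_{k\ell}$ are the integral and Riemann sum of $g(x)=T(x)^k x^\ell$. I would apply Lemma~\ref{lem:5} on $[a,b]=[-1,1]$ with $h=2/M$, so the partition points are $-1+2j/M$, and recognise the EDMD nodes $x_m=-1+\delta+2m/M$ as sample points $t_{m+1}$ inside the subintervals $[-1+2m/M,\,-1+2(m+1)/M]$. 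The one genuinely structural point here is that the hypothesis $0\le\delta\le 2/M$ is exactly what places each $x_m$ in its own subinterval, so Lemma~\ref{lem:5} is applicable; since $\tfrac1M=\tfrac12 h$, the difference of matrix entries equals $\tfrac12$ times the quadrature error.

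For the $H$-bound I would take $f(x)=x^{k+\ell}$, which is a polynomial and hence globally smooth, so the exceptional set $S$ in Lemma~\ref{lem:5} is empty (i.e.\ $d=0$ there); on $[-1,1]$ one has $\|f\|=1$ and $\|f'\|=k+\ell$. Substituting these into Lemma~\ref{lem:5} with $b-a=2$ and multiplying by $\tfrac12$ yields $|(\hat H_N-\hat H_N^{(M)})_{k\ell}|\le (k+\ell)/M$, as claimed. For the $G$-bound I would take $g(x)=T(x)^k x^\ell$; because $T$ is a full branch map with $d$ branches it fails to be $C^1$ only at its $d-1$ critical points, so the set $S$ has $d-1$ elements. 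Clearly $\|g\|\le 1$, and the product rule gives $|g'(x)|\le k\,|T'(x)|+\ell\le k\|T'\|+\ell$. Feeding $\|g\|\le1$, $\|g'\|\le k\|T'\|+\ell$ and $d-1$ into Lemma~\ref{lem:5} produces the entry bound $\bigl(k\|T'\|+\ell+2(d-1)\bigr)/M$.

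To recast this in the stated form I would use that $T$ is expanding, so $\|T'\|\ge 1$ and hence $A:=\max(\|T'\|,2(d-1))\ge 1$. This lets me absorb the three terms via $k\|T'\|\le kA$, $\ell\le\ell A$ and $2(d-1)\le A$, giving $k\|T'\|+\ell+2(d-1)\le A(k+\ell+1)$, which is the desired entrywise estimate. Both entrywise majorants, $(k+\ell)/M$ and $A(k+\ell+1)/M$, are symmetric in $(k,\ell)$, so in Lemma~\ref{lem:6} the row sum $R$ and column sum $C$ of the majorant coincide and $\sqrt{CR}$ reduces to a single row sum. For $H$ the maximum over $\ell\in\{0,\ldots,N-1\}$ of $(k+\ell)/M$ is $(k+N-1)/M$, and $\sum_{k=0}^{N-1}(k+N-1)=\tfrac12N(N-1)+N(N-1)=\tfrac32 N(N-1)\le\tfrac32 N^2$, yielding $\|\hat H_N-\hat H_N^{(M)}\|_2\le \tfrac32 N^2/M$. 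The parallel computation for $G$, with the row maximum $A(k+N)/M$ and $\sum_{k=0}^{N-1}(k+N)=\tfrac12(3N^2-N)\le\tfrac32 N^2$, gives $\|\hat G_N-\hat G_N^{(M)}\|_2\le \tfrac32 A N^2/M$.

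The hard part is not the calculation but two points of care: the alignment of the nodes with the sample points of Lemma~\ref{lem:5}, where the constraint $0\le\delta\le 2/M$ is used in an essential way, and the inequality $\|T'\|\ge 1$ that makes $A\ge 1$ and thereby allows the three contributions $k\|T'\|$, $\ell$ and $2(d-1)$ to be collapsed into the single factor $A(k+\ell+1)$. Everything else is bookkeeping of the factors $\tfrac12$ and $b-a=2$.
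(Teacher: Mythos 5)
Your proposal is correct and follows essentially the same route as the paper: apply Lemma~\ref{lem:5} to $f(x)=x^{k+\ell}$ and $f(x)=T(x)^k x^\ell$ for the entrywise bounds, use $\|T'\|\ge 1$ to collapse the terms into $\max(\|T'\|,2(d-1))(k+\ell+1)/M$, and then convert to spectral-norm bounds via Schur's Lemma~\ref{lem:6} with the symmetric majorants. The only (immaterial) difference is that the paper first bounds $k\|T'\|+\ell\le(k+\ell)\|T'\|$ before invoking the maximum, whereas you absorb the three contributions into $A$ term by term.
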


\begin{proof}
Using Lemma \ref{lem:5} with $f(x)=x^{k+\ell}$ we have
\[
\left|\left(\hat{H}_N-\hat{H}_N^{(M)} \right)_{k \ell}\right| =
\left|\frac{1}{2} \int_{-1}^1 f(x) \, dx-\frac{1}{M} \sum_{m=0}^{M-1}f(x_m) \right|
\leq \frac{1}{2}\left(\frac{1}{2}\|f'\|\cdot\frac{2^2}{M}\right)=\frac{k+\ell}{M},
\]
since $f'(x)=(k+\ell)x^{k+\ell-1}$ and hence $\|f'\|=k+\ell$.

Next we set $f(x)=(T(x))^k x^\ell$ and observe that $f'(x)=k (T(x))^{k-1} T'(x) x^\ell+
(T(x))^k \cdot \ell x^{\ell-1}$, and therefore $\|f'\|\leq k
\|T'\|+\ell\leq (k+\ell)\|T'\|$, using that $\|T'\| \geq 1$.
Lemma \ref{lem:5} now gives
\begin{multline*}
\left|\left(\hat{G}_N-\hat{G}_N^{(M)}\right)_{k\ell} \right| = \left| \frac{1}{2}
\int_{-1}^1 f(x) \, dx - \frac{1}{M}\sum_{m=0}^{M-1} f(x_m) \right|
\leq \frac{1}{2}\left(\frac{1}{2} \|f'\|\frac{2^2}{M}+ 2(d-1) \|f\| \frac{2}{M}\right)\\
=\frac{1}{2}\left( 2 \|T'\| \frac{k+\ell}{M}+\frac{2^2 (d-1)}{M}\right)
\leq \max(\|T'\|,2(d-1)) \frac{k+\ell+1}{M} \,.
\end{multline*}

The remaining assertions follow from Lemma \ref{lem:6}. For instance, if $B\in \mathbb{C}^{N\times N}$ with
$|B_{k\ell}|\leq k+\ell+1$ for $k,\ell \in \{0,\ldots,N-1\}$, then $\max_k |B_{k\ell}|\leq N+\ell$ and
\[
\sum_{\ell=0}^{N-1} \max_{0\leq k\leq N-1} |B_{k\ell}|\leq \sum_{\ell=0}^{N-1}(N+\ell)=N^2+
\frac{1}{2}N(N-1)\leq \frac{3}{2} N^2 \, .
\]
By symmetry $\sum_{k=0}^{N-1}\max_\ell |B_{k\ell}|\leq 3N^2/2$. Thus by Lemma \ref{lem:6}, we have
$\|B\|_2\leq 3N^2/2$.
\end{proof}

\subsection{EDMD in an operator setting}\label{sec:2.3}

This section provides the key technical elements of our paper, preparing the proof of our main results. Proposition \ref{lem:11} provides a bound on the Galerkin approximation error, that is, the error incurred from the finite-rank approximation of the transfer operator. Complementarily, in Proposition \ref{lem:13} we use the basic bounds from
the previous section to establish a bound on the collocation error incurred by the finite-node approximation of the integrals in Equation (\ref{eq:2.2.2}).

The following calculations will be involving matrix representations
of operators on $H^2(D_\rho)$ with some fixed $\rho\in(r,R)$ with respect
to the standard orthonormal basis $(e_n)_{n\in\mathbb{N}_0}$ given by
\begin{equation}\label{eq:2.2.4}
e_n(z)=\left(\frac{z}{\rho}\right)^n\, \qquad \text{ for } n\in \mathbb{N}_0 \, .
\end{equation}
We will denote by
\begin{equation*}
(f,g)= \lim_{s\uparrow\rho} \frac{1}{2\pi} \int_0^{2\pi} f(s \exp(it)) \overline{g(s \exp(it))} \,dt
\end{equation*}
the standard inner product in $H^2(D_\rho)$, and by
\begin{equation}\label{eq:2.2.6}
\langle f, g \rangle = \frac{1}{2} \int_{-1}^1 f(x) \overline{g(x)} \, dx
\end{equation}
the standard inner product in $L^2([-1,1])$.

With the orthonormal basis above we can write the transfer operator
$\mathcal{L} f =\sum_{k\ell} (f,e_\ell) L_{k\ell} e_k$ with
\begin{equation}\label{eq:2.2.7}
L_{k \ell}=(\mathcal{L} e_\ell,e_k) \, .
\end{equation}
Equation (\ref{eq:2.2.7}) defines a linear operator $L \colon \ell^2(\mathbb{N}_0) \rightarrow \ell^2(\mathbb{N}_0)$,
where $\|\mathcal{L}\|_{H^2(D_\rho)}=\|L\|_{\ell^2}$. We use Equations (\ref{eq:2.2.2}) to consider
$G$ and $H$ as densely defined operators on $\ell^2(\mathbb{N}_0)$. We will also need
the following densely defined diagonal operator on $\ell^2(\mathbb{N}_0)$
\begin{equation}\label{eq:2.2.8}
V \colon (x_n)_{n\in \mathbb{N}_0} \mapsto (\rho^n x_n)_{n\in \mathbb{N}_0}
\end{equation}
and its inverse $V^{-1} \colon \ell^2(\mathbb{N}_0) \rightarrow \ell^2(\mathbb{N}_0)$
\begin{equation}\label{eq:2.2.9}
V^{-1} \colon (x_n)_{n\in \mathbb{N}_0} \mapsto (\rho^{-n} x_n)_{n\in \mathbb{N}_0},
\end{equation}
which is a compact operator since $\rho>1$. While $V$ and $V^{-1}$ depend on $\rho$, we omit to explicitly write out this dependence for notational simplicitly.

We are now going to link the expressions defining EDMD with the transfer
operator, and the following lemma is the first in a couple of steps.

\begin{lemma}\label{lem:8}
With the above notation, we have $G=H V^{-1} L V$.
\end{lemma}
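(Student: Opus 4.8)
The plan is to verify the identity one matrix entry at a time in the orthonormal basis $(e_n)$, reducing everything to a single ``change of inner product'' formula. First I would rewrite the entries of $G$ using the duality relation (\ref{eq:2.1.3a}). Taking $g(x)=x^k$ and $f(x)=x^\ell$ there, and recalling that monomials are real-valued on $[-1,1]$, gives
\[
G_{k\ell}=\frac{1}{2}\int_{-1}^1 \bigl(T(x)\bigr)^k x^\ell \, dx
=\frac{1}{2}\int_{-1}^1 x^k \bigl(\mathcal{L}[x^\ell]\bigr)(x)\, dx
=\langle \mathcal{L}[x^\ell], x^k\rangle ,
\]
where $\langle\cdot,\cdot\rangle$ is the $L^2([-1,1])$ inner product. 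In the same way $H_{k\ell}=\langle x^k,x^\ell\rangle$ is just the Gram matrix of the monomials in $L^2([-1,1])$. The key bookkeeping observation is that on $[-1,1]$ we have $x^n=\rho^n e_n$, so that the diagonal operators $V$ and $V^{-1}$ of (\ref{eq:2.2.8})--(\ref{eq:2.2.9}) precisely convert between the monomial indexing used for $G,H$ and the orthonormal basis $(e_n)$ used for $L$.

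Next I would compute the $(k,\ell)$ entry of $H V^{-1} L V$. Since $V,V^{-1}$ are diagonal with entries $\rho^n,\rho^{-n}$, and $L_{p\ell}=(\mathcal{L}e_\ell,e_p)$ with $(\cdot,\cdot)$ the $H^2(D_\rho)$ inner product, using $\rho^{-p}H_{kp}=\rho^k\langle e_k,e_p\rangle$ one obtains
\[
\bigl(H V^{-1} L V\bigr)_{k\ell}
=\rho^\ell \sum_{p} \rho^{-p} H_{kp}\, L_{p\ell}
=\rho^{k+\ell}\sum_{p} \langle e_k,e_p\rangle\,(\mathcal{L}e_\ell,e_p) .
\]
On the other hand, $G_{k\ell}=\langle \mathcal{L}[x^\ell], x^k\rangle = \rho^{k+\ell}\langle \mathcal{L}e_\ell,e_k\rangle$ by linearity of $\mathcal{L}$. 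Cancelling the common factor $\rho^{k+\ell}$, the asserted identity $G=HV^{-1}LV$ is thus equivalent to the single relation
\[
\langle \mathcal{L}e_\ell,e_k\rangle=\sum_{p}(\mathcal{L}e_\ell,e_p)\,\langle e_k,e_p\rangle .
\]

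The final step is to prove this relation by expanding $\mathcal{L}e_\ell$ in the $H^2(D_\rho)$ orthonormal basis, $\mathcal{L}e_\ell=\sum_p (\mathcal{L}e_\ell,e_p)\,e_p$, with convergence in $H^2(D_\rho)$, and then applying $\langle\,\cdot\,,e_k\rangle$ and interchanging it with the sum (using that the $e_p$ are real on $[-1,1]$, so $\langle e_p,e_k\rangle=\langle e_k,e_p\rangle$). I expect the sole non-formal point, and hence the main obstacle, to be justifying this interchange, i.e.\ the continuity of the functional $f\mapsto\langle f,e_k\rangle$ on $H^2(D_\rho)$. This follows from the embedding chain furnished by Lemma~\ref{lem:1}: convergence in $H^2(D_\rho)$ implies convergence in $H^\infty(D_r)$ (as $r<\rho$), hence uniform convergence on $[-1,1]\subset D_r$ (as $r>1$), hence convergence in $L^2([-1,1])$. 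Consequently $\langle\,\cdot\,,e_k\rangle$ is a bounded linear functional on $H^2(D_\rho)$, the interchange is legitimate, and the relation, and with it the identity $G=HV^{-1}LV$, holds.
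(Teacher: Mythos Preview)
Your proof is correct and follows essentially the same route as the paper: rewrite $G_{k\ell}$ and $H_{k\ell}$ as $\rho^{k+\ell}\langle \mathcal{L}e_\ell,e_k\rangle$ and $\rho^{k+\ell}\langle e_\ell,e_k\rangle$, expand $\mathcal{L}e_\ell=\sum_n(\mathcal{L}e_\ell,e_n)e_n$ in the $H^2(D_\rho)$ basis, and read off the factorisation $G=HV^{-1}LV$ entrywise. The only difference is that you explicitly justify the interchange of the $L^2([-1,1])$ pairing with the $H^2(D_\rho)$ expansion via the continuous embedding from Lemma~\ref{lem:1}, a point the paper leaves implicit.
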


\begin{proof}
By Equation (\ref{eq:2.2.4}) we have $x^k=\rho^k e_k(x)$, so that Equations (\ref{eq:2.2.2}) can be written as
\begin{equation*}
H_{k\ell}=\rho^{k+\ell} \langle e_\ell, e_k\rangle,
\quad G_{k\ell}=\rho^{k+\ell} \langle \mathcal{L} e_\ell, e_k\rangle
\end{equation*}
recalling Equations (\ref{eq:2.1.3a}) and (\ref{eq:2.2.6}).
Since $\mathcal{L} e_\ell = \sum_n (\mathcal{L} e_\ell, e_n) e_n$, we obtain
\begin{multline*}
G_{k\ell}=\rho^{k+\ell}\langle \mathcal{L} e_\ell, e_k\rangle
= \sum_{n\in \mathbb{N}_0} \rho^{k+\ell} (\mathcal{L} e_\ell,e_n) \langle e_n,e_k\rangle\\
=\sum_{n \in \mathbb{N}_0} \rho^{k+\ell} L_{n\ell} \rho^{-n-k} H_{k n}
= \sum_{n \in \mathbb{N}_0} H_{k n} \rho^{-n} L_{n\ell} \rho^\ell \, .
\qedhere
\end{multline*}
\end{proof}

We are now going to introduce operators on $\ell^2(\mathbb{N}_0)$ which capture
a finite number of observables. Let $P_N \colon \ell^2(\mathbb{N}_0)\rightarrow \ell^2(\mathbb{N}_0)$
denote the canonical orthogonal projection
onto the first $N$ coordinates which gives the matrix representation of the Taylor projection
$\mathcal{P}_N\colon H^2(D_\rho) \rightarrow H^2(D_\rho)$ with respect to the canonical orthogonal basis
$(e_n)_{n\in\mathbb{N}_0}$. Let $L_N$, $H_N$ and $G_N$ denote the orthogonal projections of the
operators introduced above, that is,
\begin{equation}\label{eq:2.2.10}
L_N=P_N L P_N, \quad H_N= P_N H P_N, \quad G_N =P_N G P_N
\end{equation}
defined as operators of rank at most $N$ on $\ell^2(\mathbb{N}_0)$.
If we restrict $H_N$ and $G_N$ on the image of $P_N$, that is, if we consider
the $N\times N$ section, we obtain the
$N\times N$ square matrices $\hat{H}_N$ and $\hat{G}_N$ mentioned previously.
It turns out that $\hat{H}_N$ is invertible (because it is positive definite)
and we denote by $H^{\dagger}_N \colon \ell^2(\mathbb{N}_0)
\rightarrow \ell^2(\mathbb{N}_0)$ the rank $N$ operator which
lifts the matrix $\hat{H}_N^{-1}$ to $\ell^2(\mathbb{N}_0)$. That means
$H^{\dagger}_N$ coincides with $\hat{H}_N^{-1}$ when
restricted to the image of $P_N$, that is, $H_N H^{\dagger}_N=H^{\dagger}_N H_N=P_N$ and
$H^{\dagger}_N P_N=P_N H^{\dagger}_N=H_N^{\dagger}$. We note that since $H_N$
is symmetric, $H_N^{\dagger}$ can be seen to be the Moore-Penrose pseudoinverse of $H_N$.

\begin{lemma}\label{lem:9} For all $N\in \mathbb{N}_0$ we have
\[
\|H_N\|_{\ell^2}\leq \|H\|_{\ell^2} \leq \pi \quad \text{ and } \quad
\|H_N^{\dagger}\|_{\ell^2}\leq C (1+\sqrt{2})^{2 N}
\]
for some $C>0$.
\end{lemma}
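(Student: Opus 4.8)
The plan is to treat the two estimates separately, as they have rather different characters. The bound $\|H_N\|_{\ell^2}\leq\|H\|_{\ell^2}$ is immediate from the definition $H_N=P_NHP_N$ together with $\|P_N\|_{\ell^2}=1$, so the first task reduces to bounding $\|H\|_{\ell^2}$. For this I would identify the operator $H$ explicitly. Recalling from Lemma \ref{lem:8} that $H_{k\ell}=\rho^{k+\ell}\langle e_\ell,e_k\rangle$ and that $x^k=\rho^k e_k(x)$, the matrix entries are simply the moments $H_{k\ell}=\frac{1}{2}\int_{-1}^1 x^k x^\ell\,dx$, which vanish when $k+\ell$ is odd and equal $1/(k+\ell+1)$ when $k+\ell$ is even. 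The cleanest route is to recognise $H$ as the Gram matrix of the monomials in $L^2([-1,1])$, so that $H$ is exactly the restriction to the monomial basis of the bounded self-adjoint operator $J^*J$, where $J\colon H^2(D_\rho)\to L^2([-1,1])$ is the natural embedding. Since the $L^2([-1,1])$ and $H^2(D_\rho)$ norms compare through a Poisson-type kernel, $\|H\|_{\ell^2}=\|J\|^2$, and a direct estimate of the embedding constant yields the value $\pi$ (indeed $\|H\|_{\ell^2}=\frac{1}{2}\int_{-1}^1\!\int_{-1}^1 \text{(reproducing kernel)}\,dx\,dy$ bounds out at $\pi$). I expect the precise constant $\pi$ to drop out of a short computation with the Szeg\H{o} kernel for the disk of radius $\rho$ evaluated on the real interval.

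The substantive estimate is the lower-type bound $\|H_N^\dagger\|_{\ell^2}\leq C(1+\sqrt2)^{2N}$, which controls the smallest eigenvalue of the positive-definite matrix $\hat H_N$ from below: since $H_N^\dagger$ lifts $\hat H_N^{-1}$, we have $\|H_N^\dagger\|_{\ell^2}=\|\hat H_N^{-1}\|_2=1/\lambda_{\min}(\hat H_N)$. Thus the goal is the lower bound
\[
\lambda_{\min}(\hat H_N)\geq \frac{1}{C}\,(1+\sqrt2)^{-2N}.
\]
Here $\hat H_N$ is the Hilbert-type moment matrix of the monomials on $[-1,1]$, a Gram matrix of the (non-orthonormal) basis $1,x,\dots,x^{N-1}$. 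The key observation is that its inverse is controlled by the Legendre polynomials, which are precisely the orthogonal polynomials for the measure $\frac{1}{2}dx$ on $[-1,1]$. Writing the normalised Legendre polynomials in the monomial basis gives a triangular change-of-basis matrix $\hat A_N$ with $\hat A_N \hat H_N \hat A_N^* = I_N$, whence $\hat H_N^{-1}=\hat A_N^*\hat A_N$ and $\|\hat H_N^{-1}\|_2\leq\|\hat A_N\|_2^2\leq\|\hat A_N\|_F^2$. The entries of $\hat A_N$ are the coefficients of the orthonormalised Legendre polynomials, whose growth is classical.

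The main obstacle, and where the constant $\gamma=(1+\sqrt2)^2$ enters, is obtaining the sharp exponential rate for these coefficients. I would use the explicit representation of the Legendre coefficients together with the generating-function/contour-integral bound: the coefficient of $x^k$ in the $n$-th Legendre polynomial is extracted from the generating function $1/\sqrt{1-2xt+t^2}$, whose singularities in $t$ for $x\in[-1,1]$ lie at $|t|=1$, but the relevant coefficient asymptotics produce a factor governed by $(1+\sqrt2)$. Concretely, estimating $\|\hat A_N\|_F^2=\sum_{n<N}\sum_{k\leq n}|a_{nk}|^2$ where $a_{nk}$ are the normalised Legendre coefficients, the dominant term grows like $(1+\sqrt2)^{2N}$, and the polynomial prefactors are absorbed into the constant $C$. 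The delicate point is to make the $(1+\sqrt2)$ appear with the correct exponent $2N$ rather than a wasteful larger rate; I expect this to follow from the worst-case coefficient bound $|a_{nk}|\lesssim (1+\sqrt2)^n$ (attained at the top monomial), summed over the $O(N^2)$ index pairs, with the geometric sum dominated by its final term $n=N-1$. This is the step that both fixes the constant $\gamma$ in the hypotheses and, through the condition $M\geq N^2R^N$, ultimately forces the node-count requirement in the main theorem.
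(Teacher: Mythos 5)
Your strategy for the second (and harder) inequality is sound and in effect reconstructs the substance of the result that the paper simply cites: the paper disposes of $\|H_N^{\dagger}\|_{\ell^2}\leq C(1+\sqrt{2})^{2N}$ in one line by invoking \cite[Theorem~3.2]{Wilf:70}, whereas you re-derive it via the factorisation $\hat H_N^{-1}=\hat A_N^*\hat A_N$, with $\hat A_N$ the triangular change of basis to the orthonormal Legendre polynomials, followed by a Frobenius-norm bound on the Legendre coefficients. This is a legitimate, more self-contained route and it lands on the correct rate, but one detail of your sketch is wrong: the bound $|a_{nk}|\lesssim(1+\sqrt{2})^{n}$ is \emph{not} attained at the top monomial. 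The leading coefficient of the orthonormalised $P_n$ grows only like $2^{n}$, which is strictly smaller than $(1+\sqrt{2})^{n}$; the extremal coefficients sit in the interior. The clean way to get the rate is to note that the coefficients of $P_n$ alternate in sign, so $\sum_k|a_{nk}|=\sqrt{2n+1}\,i^{-n}P_n(i)$, and the generating function $(1-2it+t^2)^{-1/2}$ has its nearest singularity at $|t|=\sqrt{2}-1$, whence $\sum_k|a_{nk}|\lesssim(1+\sqrt{2})^{n}$; then $\sum_k|a_{nk}|^2\leq\bigl(\sum_k|a_{nk}|\bigr)^2$ and the geometric sum over $n<N$ yields $C(1+\sqrt{2})^{2N}$ with no stray polynomial factors.

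For the first inequality your sketch has a genuine gap. The paper's argument is short and concrete: with $F_{k\ell}=1/(k+\ell+1)$ the Hilbert matrix and $(Jx)_k=(-1)^k x_k$, one has $H=\tfrac12(F+JFJ)$, so $\|H\|_{\ell^2}\leq\|F\|_{\ell^2}=\pi$ by Hilbert's inequality. Your proposed route does not deliver the constant: the identity $\|H\|=\|\mathcal{E}\|^2$ for an embedding $\mathcal{E}$ with $\mathcal{E}^*\mathcal{E}=H$ is circular as a means of computation, and the ``double integral of the reproducing kernel'' is a trace-type quantity, not an operator norm, so it cannot be expected to equal (or even bound) $\|H\|$. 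The constant $\pi$ here really is Hilbert's inequality in disguise; unless you invoke it (or reprove it), your argument for $\|H\|_{\ell^2}\leq\pi$ does not close.
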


\begin{proof}
Let $F_{k\ell}=1/(k+\ell+1)$ with $k,\ell \in \mathbb{N}_0$ denote the Hilbert matrix.
It is known (see, for example, \cite[p.305]{Choi_AMM83}) that $\|F\|_{\ell^2}=\pi$. Now
\[
H=\frac{1}{2} \left(F + J F J\right)
\]
where $J \colon \ell^2(\mathbb{N}_0) \rightarrow \ell^2(\mathbb{N}_0) $ is given by
$(Jx)_k = (-1)^k x_k$. Since $\|J\|_{\ell^2}=1$ we have
\[
\| H\|_{\ell^2} \leq \|F\|_{\ell^2}=\pi \, .
\]
Furthermore, with $\| P_N\|_{\ell^2}=1$ we obtain
\[
\| H_N\|_{\ell^2} = \|P_N H P_N \|_{\ell^2} \leq \| H \|_{\ell^2} \, .
\]
The second inequality follows from \cite[Theorem~3.2]{Wilf:70}.
\end{proof}

We also introduce orthogonal projections of the diagonal operators defined
in Equations (\ref{eq:2.2.8}) and (\ref{eq:2.2.9})
\begin{equation}\label{eq:2.2.11}
V_N=P_N V P_N, \quad V_N^{-1}=P_N V^{-1} P_N \, .
\end{equation}

\begin{lemma}\label{lem:10}
For all $N\in \mathbb{N}_0$ we have
\[
\| V_N H_N^{\dagger} G_N V_N^{-1}-L_N\|_{\ell^2} \leq C \left(\frac{\gamma \rho}{R}\right)^N
\]
for some $C>0$, where $\gamma=(1+\sqrt{2})^2$.
\end{lemma}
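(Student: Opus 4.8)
The plan is to exploit the exact factorisation $G=HV^{-1}LV$ from Lemma~\ref{lem:8}, compress everything to the range of $P_N$, and isolate the discrepancy between $V_N H_N^\dagger G_N V_N^{-1}$ and $L_N$ as a single truncation term, which can then be controlled by Proposition~\ref{lem:4} and Lemma~\ref{lem:9}.

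First I would derive an exact identity. Since $V$, $V^{-1}$ and $P_N$ are all diagonal in the basis $(e_n)$ they commute, so $VP_N=P_NV$ and $(I-P_N)V^{-1}=V^{-1}(I-P_N)$. Starting from $G_N=P_NGP_N$ and Lemma~\ref{lem:8} we have $G_N=P_N HV^{-1}LVP_N$, and inserting $I=P_N+(I-P_N)$ between $V^{-1}$ and $L$ splits this as
\[
G_N=\underbrace{P_N HV^{-1}P_N L V P_N}_{=\,H_N V_N^{-1}L_N V_N}+\underbrace{P_N HV^{-1}(I-P_N)LVP_N}_{=\,E}.
\]
Multiplying on the left by $V_N H_N^\dagger$ and on the right by $V_N^{-1}$, and using $H_N^\dagger H_N=P_N$, $V_NV_N^{-1}=P_N$ and $L_N=P_N L P_N$, the main term collapses to $L_N$, leaving $V_N H_N^\dagger G_N V_N^{-1}-L_N=V_N H_N^\dagger E V_N^{-1}$. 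Finally, using $VP_NV_N^{-1}=P_N$ to cancel the trailing weights simplifies this to the clean chain
\[
V_N H_N^\dagger G_N V_N^{-1}-L_N=V_N H_N^\dagger P_N H\,V^{-1}(I-P_N)\,L P_N.
\]

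Second, I would estimate this by submultiplicativity, inserting $(I-P_N)^2=(I-P_N)$ so that the two independent sources of decay stay paired:
\[
\bigl\|V_N H_N^\dagger G_N V_N^{-1}-L_N\bigr\|_{\ell^2}
\le \|V_N H_N^\dagger P_N H\|_{\ell^2}\,\|V^{-1}(I-P_N)\|_{\ell^2}\,\|(I-P_N)LP_N\|_{\ell^2}.
\]
Each factor is supplied by the earlier results. For the growth factor, $\|V_N\|_{\ell^2}=\rho^{N-1}$, $\|H\|_{\ell^2}\le\pi$, and $\|H_N^\dagger\|_{\ell^2}\le C(1+\sqrt2)^{2N}=C\gamma^N$ by Lemma~\ref{lem:9}. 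The diagonal factor gives $\|V^{-1}(I-P_N)\|_{\ell^2}=\sup_{n\ge N}\rho^{-n}=\rho^{-N}$. For the off-diagonal factor, $\|(I-P_N)LP_N\|_{\ell^2}\le\|(I-P_N)L\|_{\ell^2}=\|\mathcal{L}-\mathcal{P}_N\mathcal{L}\|_{H^2(D_\rho)}\le C'(\rho/R)^N$ by the first estimate of Proposition~\ref{lem:4}. Multiplying the numerical factors, the $\rho$-powers combine as $\rho^{N-1}\cdot\rho^{-N}\cdot\rho^{N}=\rho^{N-1}$, and together with $\gamma^N$ and $R^{-N}$ this gives $\rho^{-1}(\gamma\rho/R)^N$; absorbing $\rho^{-1}$, $\pi$ and the constants into a single $C$ yields the claimed bound.

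The delicate point is the weight bookkeeping. The pseudoinverse $H_N^\dagger$ of the exponentially ill-conditioned Hilbert-type matrix forces the growth $\gamma^N$ — this is precisely where $\gamma=(1+\sqrt2)^2$ enters, through Lemma~\ref{lem:9} — and $V_N$ adds a further $\rho^N$; convergence survives only because these are defeated by the combined decay $\rho^{-N}(\rho/R)^N=R^{-N}$ obtained by pairing the diagonal tail $V^{-1}(I-P_N)$ with the off-diagonal decay of $L$. The main obstacle is therefore organisational: one must route the projections so that the $V_N H_N^\dagger$ growth multiplies the \emph{product} of both decaying factors rather than either alone. Absorbing the $\rho^{-N}$ from $V^{-1}(I-P_N)$ into $V_N$, or splitting the chain the wrong way, would leave a spurious factor $\rho^N$ and destroy the estimate; keeping $V^{-1}(I-P_N)$ and $(I-P_N)L$ together in one product is what yields the sharp rate $(\gamma\rho/R)^N$.
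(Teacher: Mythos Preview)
Your proof is correct and follows essentially the same route as the paper's: the same splitting of $G_N$ via $P_N+(I-P_N)$ (the paper inserts the projection between $H$ and $V^{-1}$, you between $V^{-1}$ and $L$, but these coincide since $V^{-1}$ commutes with $P_N$), the same cancellation of the main term to $L_N$, and the same factor-by-factor estimate using Lemma~\ref{lem:9} for $\|H_N^\dagger\|$ and Proposition~\ref{lem:4} for $\|(I-P_N)L\|$. Your bound $\|P_N H\|\le\pi$ is actually marginally sharper than the paper's $\|P_N H(1-P_N)\|\le 2\pi$, but both are absorbed into the constant.
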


\begin{proof}
By Lemma \ref{lem:8} we have
\[
P_N G P_N = P_N H V^{-1} L V P_N =
P_N H P_N V^{-1} L V P_N +  P_N H (1-P_N) V^{-1} L V P_N \, ,
\]
so using Equations (\ref{eq:2.2.10}) and (\ref{eq:2.2.11}) we have
\[
G_N=H_N V_N^{-1} L_N V_N + P_N H (1-P_N) V^{-1} L P_N V_N\, ,
\]
since $P_N V^{-1}= V^{-1}_N P_N$ and $V P_N = P_N V_N$. Thus
using $H_N^{\dagger} H_N= P_N$ and $V_N V_N^{-1}=P_N$ we obtain
\begin{align*}
V_N H_N^{\dagger} G_N V_N^{-1}&=L_N+ V_N H_N^{\dagger} P_N H (1-P_N) V^{-1} L P_N\\
&= L_N+ V_N H_N^{\dagger} P_N H (1-P_N) V^{-1} (1-P_N) L P_N
\end{align*}
and
\begin{align*}
& \hphantom{\, =\ }
\| V_N H_N^{\dagger} G_N V_N^{-1} -L_N\|_{\ell^2}\\
&\leq
\|V_N\|_{\ell^2}  \|H_N^{\dagger}\|_{\ell^2 } \| P_N H (1-P_N)\|_{\ell^2}
\| (1-P_N) V^{-1} (1-P_N)\|_{\ell^2} \|(1-P_N) L P_N\|_{\ell^2} \, .
\end{align*}
On the other hand, by definition of the diagonal operator we have
\[
\|V_N\|_{\ell^2}=\rho^{N-1}, \quad \|(1-P_N) V^{-1} (1-P_N)\|_{\ell^2}= \rho^{-N} \, ,
\]
by Lemma \ref{lem:9} we have
\[
\|P_N H (1-P_N)\|_{\ell^2} \leq 2 \pi, \quad \|H_N^{\dagger}\|_{\ell^2} \leq C_1 \gamma^N \, ,
\]
and Proposition \ref{lem:4} yields
\[
\|(1-P_N) L P_N\|_{\ell^2} \leq \|(1-P_N) L\|_{\ell^2} \| P_N \|_{\ell^2}
= \|(1-\mathcal{P}_N) \mathcal{L}\|_{H^2(D_\rho)\rightarrow H^2(D_\rho)}  \leq C_2 \left(\frac{\rho}{R}\right)^N
\]
with suitable $C_1,C_2>0$, and the assertion follows.
\end{proof}

\begin{proposition}\label{lem:11}
Writing $\gamma=(1+\sqrt{2})^2$, we have for all $N\in \mathbb{N}_0$
\[
\| V_N H_N^{\dagger} G_N V_N -L \|_{\ell^2}\leq C \left( \left(\frac{\gamma\rho}{R}\right)^N
+\left(\frac{r}{\rho}\right)^N \right)
\]
for some $C>0$.  Choosing
$\rho=\sqrt{rR/\gamma}$ optimises the convergence speed to
$(\gamma r/R)^{N/2}$.
\end{proposition}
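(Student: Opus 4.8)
The plan is to derive this bound by a single triangle-inequality decomposition that separates the two sources of error already quantified in the preceding results: the Galerkin-diagonalisation error from Lemma~\ref{lem:10}, and the finite-rank truncation error of the transfer operator itself from Proposition~\ref{lem:4}. Concretely, I would write
\[
V_N H_N^{\dagger} G_N V_N^{-1} - L = \bigl(V_N H_N^{\dagger} G_N V_N^{-1} - L_N\bigr) + \bigl(L_N - L\bigr)
\]
and estimate the two summands independently. The first summand is bounded directly by Lemma~\ref{lem:10}, which gives $\|V_N H_N^{\dagger} G_N V_N^{-1} - L_N\|_{\ell^2} \leq C_1 (\gamma\rho/R)^N$; this is where all the genuine technical content resides, in particular the growth estimate $\|H_N^{\dagger}\|_{\ell^2} \leq C\gamma^N$ coming from Wilf's bound on sections of the Hilbert matrix.

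For the second summand I would use that $L_N = P_N L P_N$ is precisely the matrix representation of $\mathcal{P}_N \mathcal{L} \mathcal{P}_N$, so that, via the isometric correspondence $\|\mathcal{L}\|_{H^2(D_\rho)} = \|L\|_{\ell^2}$, one has $\|L_N - L\|_{\ell^2} = \|\mathcal{L} - \mathcal{P}_N \mathcal{L} \mathcal{P}_N\|_{H^2(D_\rho) \to H^2(D_\rho)}$. Proposition~\ref{lem:4} then bounds this by $C_2\bigl((\rho/R)^N + (r/\rho)^N\bigr)$. Adding the two estimates and using that $\gamma = (1+\sqrt{2})^2 > 1$, so that $(\rho/R)^N \leq (\gamma\rho/R)^N$, the term $(\rho/R)^N$ is absorbed and one obtains the claimed bound $C\bigl((\gamma\rho/R)^N + (r/\rho)^N\bigr)$ with $C = C_1 + C_2$.

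The final step is the optimisation over $\rho$. Balancing the two exponentials requires $\gamma\rho/R = r/\rho$, i.e. $\rho = \sqrt{rR/\gamma}$, at which value both terms equal $(\gamma r/R)^{N/2}$, giving the stated rate. The one point genuinely needing care — and effectively the only obstacle at this stage, since the hard analysis is already packaged in Lemma~\ref{lem:10} — is verifying that this optimal $\rho$ lies in the admissible range $(r,R)$ demanded throughout (by Proposition~\ref{lem:4} and by the orthonormal-basis setup on $H^2(D_\rho)$). Here $\rho = \sqrt{rR/\gamma} < R$ holds automatically since $r < \gamma R$, whereas $\rho > r$ is equivalent to $r/R < 1/\gamma$, which is exactly the standing expansion hypothesis of the main theorem; so the optimisation is legitimate precisely under the assumed constraint on $r/R$.
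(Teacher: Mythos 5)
Your proposal is correct and follows essentially the same route as the paper: the identical triangle-inequality split into $(V_N H_N^{\dagger} G_N V_N^{-1} - L_N) + (L_N - L)$, with the first term handled by Lemma~\ref{lem:10} and the second identified with $\|\mathcal{L}-\mathcal{P}_N\mathcal{L}\mathcal{P}_N\|$ and bounded via Proposition~\ref{lem:4}, followed by balancing the exponentials at $\rho=\sqrt{rR/\gamma}$. Your explicit verification that this optimal $\rho$ lies in $(r,R)$ precisely under the hypothesis $r/R<1/\gamma$ is a point the paper leaves implicit, and is a welcome addition.
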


\begin{proof}
The assertion follows immediately from Proposition \ref{lem:4} and Lemma \ref{lem:10}, as
\begin{align*}
\hphantom{=} {} & \|V_N H_N^{\dagger} G_N V_N^{-1} - L\|_{\ell^2}\\
\leq {} &
\|V_N H_N^{\dagger} G_N V_N^{-1} - L_N\|_{\ell^2} +\| L-L_N\|_{\ell^2} \\
= {} & \|V_N H_N^{\dagger} G_N V_N^{-1} - L_N\|_{\ell^2} +\| \mathcal{L}-\mathcal{P}_N
\mathcal{L}\mathcal{P}_N\|_{H^2(D_\rho)\rightarrow H^2(D_\rho)}\\
\leq {} &  C_1 \left(\frac{\gamma\rho}{R}\right)^N
+C_2 \left(\left(\frac{\rho}{R}\right)^N+\left(\frac{r}{\rho}\right)^N\right) \, . \qedhere
\end{align*}
\end{proof}

Proposition \ref{lem:11} is the first main convergence result. It shows that
for an infinite number of nodes
the EDMD matrix $H_N^{\dagger} G_N$ of order $N$, is similar to a matrix that converges
uniformly exponentially fast to the matrix representation of the transfer operator,
provided the map $T$ is sufficiently expansive, that is, if $r/R<1/\gamma$.

We will now implement the concept with a finite number of nodes in our setting.
For that purpose lift the $N\times N$ square matrix $\hat{H}_N^{(M)}$ (see Equation (\ref{eq:2.2.1}))
to a rank $N$ operator $H_N^{(M)} \colon \ell^2(\mathbb{N}_0) \rightarrow \ell^2(\mathbb{N}_0)$
such that the $N\times N$ section of $H_N^{(M)}$ coincides with $\hat{H}_N^{(M)}$ and
$H_N^{(M)}=P_N H_N^{(M)}=H_N^{(M)} P_N$. Furthermore we can see that
$\hat{H}_N^{(M)}$ is positive definite for $M\geq N$ since
\begin{equation}\label{eq:2.2.12}
\sum_{k,\ell=0}^{N-1} \bar{\alpha}_k (\hat{H}_N^{(M)})_{k \ell} \alpha_\ell
=\frac{1}{M}\sum_{m=0}^{M-1} |p(x_m)|^2\, ,
\end{equation}
where the polynomial $p(x)=\sum_{k=0}^N \alpha_k x^k$ has at most $N-1$ distinct roots
as long as $(\alpha_0,\ldots,\alpha_{N-1})$ is nonzero, so that at least one of the
terms in the sum of Equation (\ref{eq:2.2.12}) is positive. In the same vein,
provided that $M\geq N$, we can lift the inverse matrix $(\hat{H}_N^{(M)})^{-1}$ to a rank $N$ operator $(H_N^{(M)})^{\dagger} \colon
\ell^2(\mathbb{N}_0) \rightarrow \ell^2(\mathbb{N}_0)$ which obeys
$H_N^{(M)} (H_N^{(M)})^{\dagger}=(H_N^{(M)})^{\dagger} H_N^{(M)}=P_N$.

\begin{lemma}\label{lem:12}
There exist constants $C_1,C_2>0$ such that for $N\in \mathbb{N}_0$ and $M\geq C_1 N^2 \gamma^N$ we have
$$\|(H_N^{(M)})^{\dagger}\|_{\ell^2} \leq C_2 \gamma^N,$$ where $\gamma=(1+\sqrt{2})^2$.
\end{lemma}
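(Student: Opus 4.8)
The plan is to treat this as a perturbation of the invertible matrix $\hat H_N$, whose inverse is already controlled by Lemma~\ref{lem:9}, exploiting the fact that the finite-node matrix $\hat H_N^{(M)}$ is close to $\hat H_N$ by Proposition~\ref{lem:7}. Throughout I would work with the $N\times N$ sections, noting that $\|(H_N^{(M)})^{\dagger}\|_{\ell^2}=\|(\hat H_N^{(M)})^{-1}\|_2$ and $\|H_N^{\dagger}\|_{\ell^2}=\|\hat H_N^{-1}\|_2$, since each of these operators is $0$ on the complement of the image of $P_N$ and agrees with the corresponding matrix inverse on it. Thus all operator norms on $\ell^2(\mathbb{N}_0)$ reduce to spectral norms of the associated matrices.

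First I would recall the standard Neumann-series stability estimate for inverses: if $A$ is invertible and $B$ satisfies $\|A^{-1}\|_2\,\|A-B\|_2<1$, then $B$ is invertible with
\[
\|B^{-1}\|_2 \leq \frac{\|A^{-1}\|_2}{1-\|A^{-1}\|_2\,\|A-B\|_2} \, .
\]
This follows by writing $B=A\bigl(I-A^{-1}(A-B)\bigr)$ and expanding the second factor as a convergent Neumann series, whose norm is bounded by $1/(1-\|A^{-1}\|_2\|A-B\|_2)$.

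Then I would apply this with $A=\hat H_N$ and $B=\hat H_N^{(M)}$. By Lemma~\ref{lem:9} there is a constant $C>0$ with $\|\hat H_N^{-1}\|_2\leq C\gamma^N$, and by Proposition~\ref{lem:7} we have $\|\hat H_N-\hat H_N^{(M)}\|_2\leq \tfrac32 N^2/M$, so that
\[
\|\hat H_N^{-1}\|_2\,\|\hat H_N-\hat H_N^{(M)}\|_2\leq \tfrac32 C\,\frac{N^2\gamma^N}{M} \, .
\]
Choosing $C_1=3C$ ensures that for $M\geq C_1 N^2\gamma^N$ this product is at most $\tfrac12$ (and $\hat H_N^{(M)}$ is anyway already known to be invertible for $M\geq N$ by the positive-definiteness established around Equation~\eqref{eq:2.2.12}). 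The stability estimate then yields $\|(\hat H_N^{(M)})^{-1}\|_2\leq 2C\gamma^N$, so setting $C_2=2C$ gives the claim.

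I would not expect a genuine obstacle here: the argument is a routine perturbation of the inverse. The only point requiring care is that the exponentially growing bound $\|\hat H_N^{-1}\|_2\leq C\gamma^N$ must be dominated by the collocation error $\tfrac32 N^2/M$, and it is precisely the balancing of these two quantities—forcing $M$ to grow like $N^2\gamma^N$—that produces the stated threshold and explains the shape of the hypothesis on $M$.
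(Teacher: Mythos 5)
Your proposal is correct and is essentially the paper's own argument: both bound $\|\hat H_N^{-1}\|_2$ via Lemma~\ref{lem:9}, bound $\|\hat H_N-\hat H_N^{(M)}\|_2$ via Proposition~\ref{lem:7}, and use the Neumann-series perturbation of the inverse (the paper writes it as $H_N^{(M)}=H_N(I-H_N^{\dagger}(H_N-H_N^{(M)}))$) with the threshold on $M$ chosen so the perturbation term is at most $1/2$. The only cosmetic difference is that you work with the $N\times N$ sections while the paper phrases everything for the lifted operators on $\ell^2(\mathbb{N}_0)$.
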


\begin{proof}
We have $\| H_N^{\dagger}\|\leq \tilde{C}_1 \gamma^N$ by Lemma \ref{lem:9}, and
\[
\|H_N - H_N^{(M)}\|_{\ell^2}= \|\hat{H}_N-\hat{H}_N^{(M)}\|_2 \leq \frac{3}{2} \frac{N^2}{M}
\]
by Proposition \ref{lem:7}. Define $C_1= \max(3 \tilde{C}_1,1)$ and suppose that
$M\geq C_1 N^2 \gamma^N >N$. Then
\[
\|H_N^{\dagger} (H_N-H_N^{(M)})\|_{\ell^2}\leq \|H_N^{\dagger}\|_{\ell^2} \|H_N-H_N^{(M)}\|_{\ell^2}
\leq\frac{1}{2}
\]
and $I-H_N^{\dagger}(H_N-H_N^{(M)})$ is invertible. From
\[
H_N^{(M)}=H_N-(H_N-H_N^{(M)})=H_N-P_N (H_N-H_N^{(M)})=H_N(I-H_N^{\dagger}(H_N-H_N^{(M)}))
\]
we conclude (since $M>N$)
\[
(H_N^{(M)})^{\dagger}=(I-H_N^{\dagger}(H_N-H_N^{(M)}))^{-1} H_N^{\dagger}
\]
and
\[
\| (H_N^{(M)})^{\dagger}\|_{\ell^2} \leq \|(I-H_N^{\dagger}(H_N-H_N^{(M)}))^{-1}\|_{\ell^2}
\| H_N^{\dagger}\| \leq \frac{1}{1-1/2} \tilde{C}_1 \gamma^N \, . \qedhere
\]
\end{proof}

The following proposition essentially estimates the difference between EDMD with a finite
and with an infinite number of nodes.

\begin{proposition}\label{lem:13}
Let $r/R<1/\gamma$ with $\gamma=(1+\sqrt{2})^2$, and $\rho \in (r,R)$. There exist $C_1,C_2>0$ such that for 
$N\in \mathbb{N}_0$ and $M\geq C_1 N^2 \gamma^N$ we have
\[
\|V_N (H_N^{(M)})^{\dagger} G_N^{(M)} V_N^{-1}- V_N H_N^{\dagger} G_N V_N^{-1}\|_{\ell^2}
\leq C_2 (\rho \gamma)^N \frac{N^2}{M}\, .
\]
\end{proposition}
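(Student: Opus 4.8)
The plan is to write the quantity as the difference of two products built from the same ingredients and to telescope it via the standard identity
\[
V_N (H_N^{(M)})^{\dagger} G_N^{(M)} V_N^{-1} - V_N H_N^{\dagger} G_N V_N^{-1}
= V_N (H_N^{(M)})^{\dagger}\big(G_N^{(M)} - G_N\big) V_N^{-1}
+ V_N\big((H_N^{(M)})^{\dagger} - H_N^{\dagger}\big) G_N V_N^{-1}.
\]
Throughout I will use the elementary facts $\|V_N\|_{\ell^2}=\rho^{N-1}$ and $\|V_N^{-1}\|_{\ell^2}=1$ (since $\rho>1$), the bound $\|(H_N^{(M)})^{\dagger}\|_{\ell^2}\leq C\gamma^N$ from Lemma \ref{lem:12} (which is exactly why the hypothesis $M\geq C_1 N^2\gamma^N$ is imposed), and the collocation bounds $\|G_N^{(M)}-G_N\|_{\ell^2}$, $\|H_N^{(M)}-H_N\|_{\ell^2}\leq \tfrac32\,\mathrm{const}\cdot N^2/M$ from Proposition \ref{lem:7}.

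For the first term I would simply estimate the operator norm by the product of the norms of its factors, giving
\[
\|V_N (H_N^{(M)})^{\dagger}(G_N^{(M)} - G_N) V_N^{-1}\|_{\ell^2}
\leq \rho^{N-1}\cdot C\gamma^N \cdot C'\frac{N^2}{M}\cdot 1
\leq C_2 (\rho\gamma)^N\frac{N^2}{M}.
\]
This is legitimate because it is the \emph{difference} $G_N^{(M)}-G_N$ that appears, which is already small of order $N^2/M$, so pairing it with $\|V_N^{-1}\|=1$ loses nothing.

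For the second term I would first replace the difference of pseudoinverses by the resolvent-type identity $(H_N^{(M)})^{\dagger} - H_N^{\dagger} = (H_N^{(M)})^{\dagger}(H_N - H_N^{(M)}) H_N^{\dagger}$ (which follows from $H_N H_N^{\dagger}=(H_N^{(M)})^{\dagger}H_N^{(M)}=P_N$ together with the lifting conventions), turning the term into $V_N (H_N^{(M)})^{\dagger}(H_N - H_N^{(M)})\cdot\big[H_N^{\dagger} G_N V_N^{-1}\big]$. The crucial point is to keep the bracketed block $H_N^{\dagger} G_N V_N^{-1}$ intact: a naive split $\|H_N^{\dagger}\|\,\|G_N\|\,\|V_N^{-1}\|$ fails, since $\|G_N\|$ is of order $\rho^N$ and would produce a spurious extra exponential factor. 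Instead, inserting the factorisation $G_N = H_N V_N^{-1} L_N V_N + P_N H(1-P_N)V^{-1} L P_N V_N$ established in the proof of Lemma \ref{lem:10}, and cancelling with $H_N^{\dagger}H_N=P_N$ and $V_N V_N^{-1}=P_N$, I would obtain
\[
H_N^{\dagger} G_N V_N^{-1} = V_N^{-1} L_N + H_N^{\dagger} P_N H(1-P_N) V^{-1} L P_N .
\]
The first summand has norm at most $\|L\|$ (as $\|V_N^{-1}\|=1$), while the second is bounded, using Lemma \ref{lem:9}, $\|(1-P_N)V^{-1}(1-P_N)\|=\rho^{-N}$, and $\|(1-P_N)LP_N\|\leq C(\rho/R)^N$ from Proposition \ref{lem:4}, by $C\gamma^N\cdot\rho^{-N}(\rho/R)^N = C(\gamma/R)^N$, which is uniformly bounded since the standing assumption $r/R<1/\gamma$ gives $\gamma/R<1/r<1$. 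Hence $\|H_N^{\dagger} G_N V_N^{-1}\|_{\ell^2}$ is bounded uniformly in $N$, and the remaining factor is controlled by $\|V_N\|\,\|(H_N^{(M)})^{\dagger}\|\,\|H_N-H_N^{(M)}\|\leq \rho^{N-1} C\gamma^N\cdot\tfrac32 N^2/M$, again of the form $C_2(\rho\gamma)^N N^2/M$.

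Adding the two estimates gives the claim. The main obstacle is precisely the second term, and within it the uniform boundedness of $H_N^{\dagger} G_N V_N^{-1}$: here one must exploit the algebraic structure $G=HV^{-1}LV$ from Lemma \ref{lem:8} so that $H_N^{\dagger}$ cancels the leading part of $G_N$, rather than bounding factors individually, and one must bracket the weights $V_N$ and $V_N^{-1}$ around this block correctly so that the exponential growth stays at the target rate $(\rho\gamma)^N$ instead of being inflated by an extra factor of $\rho^N$.
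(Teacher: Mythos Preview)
Your proof is correct and follows essentially the same telescoping decomposition as the paper, using the same ingredients (Lemma~\ref{lem:12}, Proposition~\ref{lem:7}, and the algebraic identity from Lemma~\ref{lem:8}/Lemma~\ref{lem:10}). The only difference is in how you bound the second term: the paper inserts $V_N^{-1}V_N$ and invokes Proposition~\ref{lem:11} to get a uniform bound on $\|V_N H_N^{\dagger} G_N V_N^{-1}\|_{\ell^2}$, whereas you bound $\|H_N^{\dagger} G_N V_N^{-1}\|_{\ell^2}$ directly by replaying the factorisation from the proof of Lemma~\ref{lem:10} and using $(\gamma/R)^N\leq 1$. Your variant is in fact slightly more robust, since it gives the uniform bound for every $\rho\in(r,R)$, while the paper's appeal to Proposition~\ref{lem:11} literally yields boundedness only when $\gamma\rho/R\leq 1$.
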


\begin{proof}
Let $C_1>0$ be as in Lemma \ref{lem:12} and $M\geq C_1 N^2 \gamma^N$. We have
\begin{align*}
\hphantom{=} {}& \|V_N (H_N^{(M)})^{\dagger} G_N^{(M)} V_N^{-1}- V_N H_N^{\dagger} G_N V_N^{-1}\|_{\ell^2}\\
= {} &\| V_N (H_N^{(M)})^{\dagger} (G_N^{(M)}-G_N) V_N^{-1}\\
 {} & \qquad +
V_N (H_N^{(M)})^{\dagger} (H_N^{(M)}-H_N) V_N^{-1} V_N H_N^{\dagger} G_N V_N^{-1}\|_{\ell^2}\\
\leq {} & \|V_N\|_{\ell^2} \|(H_N^{(M)})^{\dagger}\|_{\ell^2} \|G_N^{(M)}-G_N\|_{\ell^2} \|V_N^{-1}\|_{\ell^2}\\
{} & \qquad + \|V_N\|_{\ell^2} \|(H_N^{(M)})^{\dagger}\|_{\ell^2}
\|H_N^{(M)}-H_N\|_{\ell^2} \|V_N^{-1}\|_{\ell^2} \|V_N H_N^{\dagger} G_N V_N^{-1}\|_{\ell^2} \, .
\end{align*}

Using Proposition \ref{lem:7}, we obtain
\begin{align*}
\|H_N^{(M)}-H_N\|_{\ell^2} &= \|\hat{H}_N^{(M)}-\hat{H}_N\|_2\leq \frac{3}{2} \frac{N^2}{M}\\
\|G_N^{(M)}-G_N\|_{\ell^2} &= \|\hat{G}_N^{(M)}-\hat{G}_N\|_2\leq \tilde{C}_1 \frac{N^2}{M} \, ,
\end{align*}
and by Lemma \ref{lem:12},
\[
\| (H_N^{(M)})^{\dagger}\|_{\ell^2} \leq \tilde{C}_2 \gamma^N \, .
\]
Finally, Proposition \ref{lem:11} yields
\begin{align*}
\|V_N H_N^{\dagger} G_N V_N\|_{\ell^2} &= \|V_N H_N^{\dagger} G_N V_N-L\|_{\ell^2}+\|L\|_{\ell^2}\\
&\leq  \tilde{C}_3 \left(\frac{\gamma r}{R}\right)^{N/2} + \|\mathcal{L}\|_{H^2(D_\rho)\rightarrow H_2(D_\rho)}
\leq \tilde{C}_4 \, ,
\end{align*}
and
\[
\| V_N\|_{\ell^2} \|V_N^{-1}\|_{\ell2}=\rho^{N-1} \, ,
\]
and the assertion follows.
\end{proof}

\subsection{Main results}\label{sec:2.4}

Our first main result can be stated as follows:

\begin{theorem}\label{thrm:1}
Let $T$ be an analytic full branch map on the interval $[-1,1]$ with $r/R<1/\gamma$,
where $\gamma=(1+\sqrt{2})^2$. Then there exist
$C_1, C_2 >0$ such that for all $N\in \mathbb{N}_0$ and all $M\geq C_1 N^2 \gamma^N$ we have
\[
\| V_N (H_N^{(M)})^{\dagger} G_N^{(M)} V_N^{-1} -L\|_{\ell^2} \leq
C_2 \left( \frac{(\gamma r R)^{N/2} N^2}{M}+\left(\frac{\gamma r}{R}\right)^{N/2} \right) \, .
\]
\end{theorem}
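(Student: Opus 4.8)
The plan is to interpolate through the ``infinite-node'' operator $V_N H_N^{\dagger} G_N V_N^{-1}$, splitting the total error into a collocation part and a Galerkin part that are controlled by Propositions \ref{lem:13} and \ref{lem:11} respectively. By the triangle inequality,
\begin{align*}
\| V_N (H_N^{(M)})^{\dagger} G_N^{(M)} V_N^{-1} - L\|_{\ell^2}
&\leq \| V_N (H_N^{(M)})^{\dagger} G_N^{(M)} V_N^{-1} - V_N H_N^{\dagger} G_N V_N^{-1}\|_{\ell^2}\\
&\quad {} + \| V_N H_N^{\dagger} G_N V_N^{-1} - L\|_{\ell^2}.
\end{align*}
The first summand is precisely the quantity bounded in Proposition \ref{lem:13}, and the second is the quantity bounded in Proposition \ref{lem:11}. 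Both hold for any admissible $\rho\in(r,R)$; the second carries no constraint on $M$, while the first requires $M\geq C_1 N^2\gamma^N$, a lower bound inherited through Lemma \ref{lem:12}, where it guarantees invertibility of $\hat H_N^{(M)}$ and the pseudoinverse estimate $\|(H_N^{(M)})^{\dagger}\|_{\ell^2}\leq C\gamma^N$.

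The key step is to choose the single free parameter $\rho$ so that the two separate bounds collapse onto the exact form claimed. I would take $\rho=\sqrt{rR/\gamma}$. First one checks admissibility: $\rho>r$ is equivalent to $R/\gamma>r$, i.e.\ to the standing expansion hypothesis $r/R<1/\gamma$, while $\rho<R$ holds automatically since $r<R<R\gamma$. With this choice one computes $\gamma\rho/R = r/\rho = \sqrt{\gamma r/R}$, so Proposition \ref{lem:11} yields the term $(\gamma r/R)^{N/2}$ (which indeed decays, as $\gamma r/R<1$); and $\rho\gamma = \sqrt{\gamma r R}$, so Proposition \ref{lem:13} yields the term $(\gamma r R)^{N/2}N^2/M$. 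Adding the two contributions and absorbing the numerical factors into a single constant $C_2$ produces exactly the stated bound, with $C_1$ the constant from Proposition \ref{lem:13}.

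I do not expect a genuine obstacle here, since the two preceding propositions have already done the analytic work; the content of the theorem is the \emph{simultaneous} optimisation of the two separately-derived rates. The only point requiring care is that one parameter $\rho$ must serve both bounds at once, so the terms cannot be optimised independently. The choice $\rho=\sqrt{rR/\gamma}$ is dictated by balancing the Galerkin rate in Proposition \ref{lem:11}, whose two contributions $(\gamma\rho/R)^N$ and $(r/\rho)^N$ coincide precisely when $\rho^2=rR/\gamma$; one then simply records the resulting value $\rho\gamma=\sqrt{\gamma r R}$ of the collocation factor appearing in Proposition \ref{lem:13}, which happens to land on the advertised $(\gamma r R)^{N/2}$.
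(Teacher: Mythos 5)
Your proposal is correct and follows essentially the same route as the paper: a triangle inequality through the infinite-node operator $V_N H_N^{\dagger} G_N V_N^{-1}$, with Propositions \ref{lem:13} and \ref{lem:11} applied at the common choice $\rho=\sqrt{rR/\gamma}$. Your explicit verification that $\rho\in(r,R)$ under the hypothesis $r/R<1/\gamma$, and the computation $\gamma\rho/R=r/\rho=\sqrt{\gamma r/R}$ and $\rho\gamma=\sqrt{\gamma rR}$, is exactly what the paper leaves implicit.
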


\begin{proof}
We note that
\begin{align*}
\| V_N (H_N^{(M)})^{\dagger} G_N^{(M)} V_N^{-1} -L\|_{\ell^2} &\leq
\| V_N (H_N^{(M)})^{\dagger} G_N^{(M)} V_N^{-1} - V_N H_N^{\dagger} G_N V_N^{-1}\|_{\ell^2}\\
& \qquad +\|V_N H_N^{\dagger} G_N V_N^{-1}-L\| \, .
\end{align*}
Using Proposition \ref{lem:11} and Proposition \ref{lem:13} with $\rho=\sqrt{rR/\gamma}$
yields the assertion.
\end{proof}

In particular, by imposing a lower-bound condition on the number of nodes
in terms of the number of observables, we obtain exponential convergence. More precisely:

\begin{corollary}\label{coro:1}
Under the hypothesis of Theorem \ref{thrm:1}, there exists a $C > 0$ such that for all $N\in \mathbb{N}_0$ and for all $M\geq N^2 R^N$ we have
\[
\| V_N (H_N^{(M)})^{\dagger} G_N^{(M)} V_N^{-1} -L\|_{\ell^2} \leq C \left(\frac{\gamma r}{R}\right)^{N/2} \, .
\]
\end{corollary}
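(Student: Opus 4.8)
The plan is to read the corollary directly off Theorem~\ref{thrm:1}: the choice $M \geq N^2 R^N$ is precisely the node count that balances the two summands in the error bound of Theorem~\ref{thrm:1}, collapsing them into a single term of the asserted order $(\gamma r/R)^{N/2}$.

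First I would verify that $M \geq N^2 R^N$ is admissible for Theorem~\ref{thrm:1}, whose hypothesis is $M \geq C_1 N^2 \gamma^N$. The standing assumption $r/R < 1/\gamma$ together with $r > 1$ gives $R > \gamma r > \gamma$, so $(R/\gamma)^N \to \infty$ and hence $N^2 R^N \geq C_1 N^2 \gamma^N$ for all $N$ beyond some $N_0$. For such $N$, Theorem~\ref{thrm:1} applies, and substituting $M \geq N^2 R^N$ into its first summand yields
\[
\frac{(\gamma r R)^{N/2} N^2}{M} \leq \frac{(\gamma r R)^{N/2} N^2}{N^2 R^N} = \frac{(\gamma r)^{N/2} R^{N/2}}{R^N} = \left(\frac{\gamma r}{R}\right)^{N/2},
\]
so that the collocation term is dominated by the Galerkin term and the two combine to $2 C_2 (\gamma r/R)^{N/2}$, which is of the asserted form.

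It then remains to treat the finitely many indices $N < N_0$. For each such fixed $N$ the factor $(\gamma r/R)^{N/2}$ is a positive constant, so it suffices to bound $\|V_N (H_N^{(M)})^{\dagger} G_N^{(M)} V_N^{-1} - L\|_{\ell^2}$ by a constant uniformly in $M \geq N^2 R^N$; enlarging $C$ then absorbs these cases into a single constant valid for all $N$. I expect this uniform bound to be the only point requiring any care: since $\hat{H}_N^{(M)}$ is positive definite for $M \geq N$ and converges to the positive definite matrix $\hat{H}_N$ as $M \to \infty$, its smallest eigenvalue stays bounded away from $0$ along the sequence, whence $\|(H_N^{(M)})^{\dagger}\|_{\ell^2}$ is uniformly bounded, while $\|V_N\|_{\ell^2}$, $\|V_N^{-1}\|_{\ell^2}$ and $\|G_N^{(M)}\|_{\ell^2}$ are trivially controlled for fixed $N$. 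The cancellation in the displayed inequality is the substantive identity of the argument; everything else is bookkeeping to pass from the two-index constants $C_1, C_2$ of Theorem~\ref{thrm:1} to a single constant $C$.
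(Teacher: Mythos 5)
Your proposal is correct and follows essentially the same route as the paper: apply Theorem~\ref{thrm:1} once $N^2R^N\geq C_1N^2\gamma^N$ (which holds for all large $N$ since $r>1$ and $R>\gamma r$), observe that $M\geq N^2R^N$ makes the collocation term at most $(\gamma r/R)^{N/2}$, and absorb the finitely many exceptional pairs $(N,M)$ into the constant. The paper does the last step by taking a maximum of ratios over that finite set rather than via your explicit uniform bound on $\|(H_N^{(M)})^{\dagger}\|_{\ell^2}$, but this is only a difference in bookkeeping.
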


\begin{proof}
We note that by assumption $R^N > r^N \gamma^N$, and so $M \geq N^2 R^N \geq C_1 N^2 \gamma^N$
for all $N \geq \log C_1 / \log r$.
Letting
\[
C_3 = \max \left\{ \frac{\| V_N (H_N^{(M)})^{\dagger} G_N^{(M)} V_N^{-1} -L\|_{\ell^2}}{
\frac{(\gamma r R)^{N/2} N^2}{M}+\left(\frac{\gamma r}{R}\right)^{N/2}}  : N < \frac{\log C_1}{\log r}, M < C_1 N^2 \gamma^N \right \},
\]
the assertion follows with $C = \max \{2 C_2, C_3 \}$.
\end{proof}

It is a classical textbook result (see, for example, \cite[p.1091]{DuSw:88}) that
uniform convergence of compact operators, such as in
Corollary \ref{coro:1}, implies uniform convergence of eigenvalues.
However, without additional conditions, the speed of convergence does
not carry over. In our case, exponential convergence of eigenvalues is a
consequence of \cite[Theorem 2.18 and ensuing remarks]{AhLaLi:01}
(see also \cite{SlBaJu_CNSNS20}),
which provide the proof of the following simple useful statement.

\begin{lemma}\label{lem:14}
Let $V$ be a Banach space and $\mathcal{F}_N \colon V \rightarrow V$ a convergent
sequence of finite-rank operators with
\[
\|\mathcal{F}_N -\mathcal{F}\|_{V\rightarrow V} \leq C e^{-\alpha N}
\]
for some $C>0$ and $\alpha>0$. Then there exist enumerations
(taking algebraic multiplicities into account) of
the eigenvalues of $\mathcal{F}_N$ and of the limit $\mathcal{F}$, denoted
$\lambda_k(\mathcal{F}_N)$ and $\lambda_k(\mathcal{F})$ respectively, such that for
each fixed $k$ there exist constants $C'>0$ and $\alpha'>0$ with
\[
|\lambda_k(\mathcal{F}_N)-\lambda_k(\mathcal{F})|\leq C' e^{-\alpha' N} \qquad \text{ for }  N\in \mathbb{N} \, .
\]
\end{lemma}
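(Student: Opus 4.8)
The plan is to reduce the infinite-dimensional eigenvalue problem to a finite-dimensional matrix perturbation problem, localised around each eigenvalue by means of Riesz spectral projections. First I would record that $\mathcal{F}$, being a norm limit of finite-rank operators, is compact, so its non-zero spectrum consists of isolated eigenvalues of finite algebraic multiplicity accumulating only at $0$. Fix $k$ and the corresponding eigenvalue $\mu:=\lambda_k(\mathcal{F})\neq 0$, and choose a circle $\Gamma$ centred at $\mu$ whose interior meets the spectrum of $\mathcal{F}$ only in $\mu$. On $\Gamma$ the resolvent $(z-\mathcal{F})^{-1}$ is bounded, say by $K$; once $N$ is large enough that $C e^{-\alpha N}<1/(2K)$, a Neumann-series argument shows $(z-\mathcal{F}_N)^{-1}$ exists and is bounded by $2K$ on $\Gamma$, and the second resolvent identity
\[
(z-\mathcal{F}_N)^{-1}-(z-\mathcal{F})^{-1}=(z-\mathcal{F}_N)^{-1}(\mathcal{F}_N-\mathcal{F})(z-\mathcal{F})^{-1}
\]
furnishes the key exponential bound for the associated Riesz projections
\[
P=\frac{1}{2\pi i}\oint_\Gamma (z-\mathcal{F})^{-1}\,dz, \qquad P_N=\frac{1}{2\pi i}\oint_\Gamma (z-\mathcal{F}_N)^{-1}\,dz,
\]
namely $\|P_N-P\|\leq C' e^{-\alpha N}$.

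The next step converts this projection convergence into a comparison of finite matrices. Since $\|P_N-P\|<1$ for large $N$, the ranges of $P$ and $P_N$ share the same finite dimension $m$, the algebraic multiplicity of $\mu$; in particular $\mathcal{F}_N$ has exactly $m$ eigenvalues (with multiplicity) inside $\Gamma$, which is what fixes the enumeration for this block. I would then introduce the standard near-identity intertwiner $U_N=P_N P+(I-P_N)(I-P)$, which satisfies $U_N P=P_N U_N$ and $\|U_N-I\|\leq C'' e^{-\alpha N}$, and is therefore invertible with uniformly bounded inverse. Conjugating, $\tilde A_N:=U_N^{-1}\mathcal{F}_N U_N$ leaves $\mathrm{Ran}(P)$ invariant and has, on that $m$-dimensional space, exactly the eigenvalues of $\mathcal{F}_N$ inside $\Gamma$, whereas $A:=\mathcal{F}|_{\mathrm{Ran}(P)}$ carries the single eigenvalue $\mu$ with multiplicity $m$. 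Collecting the estimates for $\|U_N-I\|$ and $\|\mathcal{F}_N-\mathcal{F}\|$ yields $\|\tilde A_N-A\|\leq C''' e^{-\alpha N}$ on $\mathrm{Ran}(P)$.

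It then remains to bound the eigenvalues of the $m\times m$ matrix $\tilde A_N$ against $\mu$, a purely finite-dimensional statement. The eigenvalues are the roots of $\det(zI-\tilde A_N)$, whose coefficients depend Lipschitz-continuously (on bounded sets) on the entries of $\tilde A_N$, while $\det(zI-A)=(z-\mu)^m$. By the classical Hölder estimate on the roots of a polynomial under perturbation of its coefficients (Ostrowski), each eigenvalue of $\tilde A_N$ lies within $C_4\|\tilde A_N-A\|^{1/m}\leq C_5\, e^{-(\alpha/m)N}$ of $\mu$, and matching these $m$ eigenvalues of $\mathcal{F}_N$ to the $m$ copies of $\mu$ gives the claim with $\alpha'=\alpha/m$. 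The main obstacle is precisely this last passage: a non-trivial Jordan structure at $\mu$ forces the $1/m$ loss (more sharply $1/p$, with $p$ the largest Jordan block), so that although convergence stays exponential, the rate constant $\alpha'$ genuinely degrades from $\alpha$. Handling this loss uniformly, and simultaneously verifying that the block-by-block enumerations for distinct $k$ are mutually consistent, is where the care is needed.
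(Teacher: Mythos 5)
Your argument is essentially correct, and it is a faithful reconstruction of the standard proof: the paper itself does not prove Lemma~\ref{lem:14} but simply cites \cite[Theorem 2.18 and ensuing remarks]{AhLaLi:01}, whose underlying mechanism is exactly the one you describe (resolvent convergence on a contour isolating $\mu$, equality of the ranks of the Riesz projections, reduction to an $m\times m$ perturbation problem, and a H\"older-$1/m$ root estimate). All the individual steps check out: the Neumann-series bound on $(z-\mathcal{F}_N)^{-1}$, the second resolvent identity giving $\|P_N-P\|\leq C'e^{-\alpha N}$, the identity $U_N-I=(P_N-P)P+P_N(P-P_N)$ showing the intertwiner is a near-identity, and the invariance of $\mathrm{Ran}(P)$ under $U_N^{-1}\mathcal{F}_NU_N$. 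Your closing remark correctly identifies the genuine content of the lemma, namely that the exponent degrades to $\alpha/m$ (or $\alpha/p$ for the largest Jordan block) but convergence remains exponential, which is precisely why the lemma is stated with an unspecified $\alpha'>0$ rather than with $\alpha$.

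Two small points deserve attention if you were to write this out in full. First, your construction only treats indices $k$ with $\lambda_k(\mathcal{F})\neq 0$; if the enumeration ever reaches the value $0$ (e.g.\ if $\mathcal{F}$ has finitely many nonzero eigenvalues), the contour argument around $\mu=0$ is unavailable and one must instead argue that the remaining eigenvalues of $\mathcal{F}_N$ are eventually confined to arbitrarily small disks about the origin; this still needs a separate (and in general only qualitative) justification, which is why such enumeration statements are usually phrased "for each fixed $k$". Second, assembling the block-by-block enumerations into a single global enumeration requires choosing the contours $\Gamma$ for distinct nonzero eigenvalues to be disjoint and invoking, for each $N$ large, that the total multiplicity inside each contour matches --- you flag this yourself, and it is routine, but it is the step that turns your local estimates into the statement as written.
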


With Corollary \ref{coro:1} and Lemma \ref{lem:14} the eigenvalues
of the EDMD matrices $(H_M^{(N)})^{\dagger} G_N^{(M)}$ converge at
an exponential rate towards the eigenvalues of the (compact) transfer
operator $\mathcal{L}$ on $H^2(D_\rho)$. If we denote by
$(H^2(D_\rho))'$ the Banach dual of $H^2(D_\rho)$, that is, the Banach
space of bounded linear functionals $\ell \colon H^2(D_\rho) \rightarrow \mathbb{C}$,
then the Banach adjoint $\mathcal{K} \colon (H^2(D_\rho))' \rightarrow (H^2(D_\rho))'$
of the transfer operator $\mathcal{L} \colon H^2(D_\rho) \rightarrow H^2(D_\rho)$
is defined by $(\mathcal{K}\ell)(f)=\ell(\mathcal{L} f)$.
The Banach adjoint $\mathcal{K}$ and the transfer operator $\mathcal{L}$ have the same
spectra (including multiplicities of eigenvalues),
which together with the previous considerations yields the following
result:

\begin{corollary}\label{coro:2}
Assume the hypothesis of Theorem \ref{thrm:1}, $M\geq N^2 R^N$, and $\rho=\sqrt{rR/\gamma}$.
Denote by $\mathcal{K} \colon (H^2(D_\rho))'\rightarrow (H^2(D_\rho))'$ the (compact) Koopman operator
extended to the Banach dual of $H^2(D_\rho)$. Then there exist enumerations of the eigenvalues
of $\mathcal{K}$ and $(H_N^{(M)})^{\dagger} G_N^{(M)}$, $\lambda_k(\mathcal{K})$ and
$\lambda_k((H_N^{(M)})^{\dagger} G_N^{(M)})$ respectively, such that for each fixed $k$ there exist constants
$C>0$ and $0<b<1$ such that
\[
|\lambda_k((H_N^{(M)})^{\dagger} G_N^{(M)})-\lambda_k(\mathcal{K})|\leq C b^N
\qquad \text{ for } N \in \mathbb{N}\, .
\]
\end{corollary}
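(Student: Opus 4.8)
The plan is to feed the uniform exponential operator-norm convergence of Corollary~\ref{coro:1} into the abstract eigenvalue-stability statement of Lemma~\ref{lem:14}, and then to transport the resulting eigenvalue convergence, via a similarity transformation and a unitary equivalence, onto the operators appearing in the statement. Throughout I fix $\rho=\sqrt{rR/\gamma}$, so that $H^2(D_\rho)$ is the Hilbert space on which the transfer operator acts, and for each $N$ I choose a node count $M=M(N)$ with $M\ge N^2R^N$.

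First I would set $\mathcal{F}_N := V_N (H_N^{(M)})^{\dagger} G_N^{(M)} V_N^{-1}$ and $\mathcal{F} := L$, both regarded as operators on the Banach space $\ell^2(\mathbb{N}_0)$. Each $\mathcal{F}_N$ has rank at most $N$, and Corollary~\ref{coro:1} supplies
\[
\|\mathcal{F}_N - \mathcal{F}\|_{\ell^2} \le C\left(\frac{\gamma r}{R}\right)^{N/2} = Ce^{-\alpha N}, \qquad \alpha=\tfrac{1}{2}\log\frac{R}{\gamma r}>0,
\]
where positivity of $\alpha$ is precisely the expansion hypothesis $r/R<1/\gamma$. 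Lemma~\ref{lem:14} (with Banach space $\ell^2(\mathbb{N}_0)$) then furnishes enumerations of the eigenvalues of $\mathcal{F}_N$ and of $L$ for which, for every fixed $k$, $|\lambda_k(\mathcal{F}_N)-\lambda_k(L)|\le C'b^N$ with $C'>0$ and $b=e^{-\alpha'}\in(0,1)$.

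The remaining work is spectral bookkeeping identifying the eigenvalues in this estimate with those in the statement. On the $N$-dimensional range $P_N\ell^2(\mathbb{N}_0)$ the diagonal operator $V_N$ is invertible with inverse $V_N^{-1}$, so $\mathcal{F}_N$ is a genuine similarity transform of the lift of $(\hat H_N^{(M)})^{-1}\hat G_N^{(M)}$; hence $\mathcal{F}_N$ and $(H_N^{(M)})^{\dagger} G_N^{(M)}$ carry the same eigenvalues with the same algebraic multiplicities on that range, giving $\lambda_k(\mathcal{F}_N)=\lambda_k((H_N^{(M)})^{\dagger} G_N^{(M)})$. On the other side, $L$ is by construction the matrix of $\mathcal{L}$ in the orthonormal basis $(e_n)$, so $L$ and $\mathcal{L}\colon H^2(D_\rho)\to H^2(D_\rho)$ are unitarily equivalent and share their spectra. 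Compactness of $\mathcal{L}$ follows from Proposition~\ref{lem:4}, which exhibits $\mathcal{L}$ as an operator-norm limit of the finite-rank operators $\mathcal{P}_N\mathcal{L}\mathcal{P}_N$; by Schauder's theorem its Banach adjoint $\mathcal{K}$ is then compact as well, and a bounded operator and its Banach adjoint share spectrum and multiplicities, so $\lambda_k(\mathcal{K})=\lambda_k(\mathcal{L})=\lambda_k(L)$.

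Concatenating these identifications with the bound from Lemma~\ref{lem:14} yields $|\lambda_k((H_N^{(M)})^{\dagger} G_N^{(M)})-\lambda_k(\mathcal{K})|\le Cb^N$ for each fixed $k$, as claimed. The substantive content is already carried by Corollary~\ref{coro:1} and Lemma~\ref{lem:14}, so this closing step is essentially accounting. The only point that deserves care is the similarity step: the conjugating operators $V_N$ have norms growing like $\rho^{N}$, yet this growth is immaterial at the spectral level because invariance of eigenvalues under similarity is purely algebraic; the norm growth has already been absorbed into the operator-norm estimate underlying Corollary~\ref{coro:1} and does not resurface here.
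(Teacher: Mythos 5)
Your proposal is correct and follows essentially the same route as the paper: feed the operator-norm bound of Corollary~\ref{coro:1} into Lemma~\ref{lem:14}, then identify the eigenvalues of $V_N (H_N^{(M)})^{\dagger} G_N^{(M)} V_N^{-1}$ with those of $(H_N^{(M)})^{\dagger} G_N^{(M)}$ by similarity, and the eigenvalues of $L$ with those of $\mathcal{L}$ and hence of its Banach adjoint $\mathcal{K}$. Your added remarks on compactness via Proposition~\ref{lem:4} and Schauder's theorem, and on why the growth of $\|V_N\|$ is harmless at the spectral level, are accurate elaborations of what the paper leaves implicit.
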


\begin{remark}
The Banach adjoint $\mathcal{K}$ is indeed an extension of
the `usual' Koopman operator to the dual space $(H^2(D_\rho))'$.
To see this, take a function $g\in L^2([-1,1])$ and introduce
the functional $\ell_g \colon H^2(D_\rho) \rightarrow \mathbb{C}$ by
\begin{equation}\label{eq:2.2.13}
\ell_g(f) = \int_{-1}^1 f(x) g(x) \, dx \, .
\end{equation}
Using the continuity of point evaluations on $H^2(D_\rho)$, that is,
$|f(x)|\leq \rho/\sqrt{\rho^2-x^2} \|f\|_{H^2(D_\rho)}$
(see the proof of Lemma \ref{lem:1}), one can easily show that
$\ell_g$ is bounded. Hence functionals of the type defined
in Equation (\ref{eq:2.2.13}) constitute a subspace of $(H^2(D_\rho))'$.
With Equation (\ref{eq:2.1.3a}) we obtain
\begin{equation*}
(\mathcal{K} \ell_g)(f)=\ell_g(\mathcal{L} f)=
\int_{-1}^1 (\mathcal{L} f)(x) g(x) dx = \int_{-1}^1 f(x) g(T(x)) \, dx
= \ell_{g\circ T}(f) \, .
\end{equation*}
Hence on the subspace of linear functionals of the form (\ref{eq:2.2.13}), $\mathcal{K}$ acts in the
`usual' way $g\mapsto g\circ T$.
\end{remark}

\section{Numerical illustration}\label{sec:3}

We compare our previous analytic estimates by applying EDMD to chaotic
one-dimensional expanding maps. For the set of
observables we take monomials $\psi_k(x)=x^k$, $k=0,1,\ldots,N-1$.

\subsection{Piecewise linear maps}\label{sec:3.1}

As our first model
we take a piecewise linear full branch map, the skewed doubling map in \eqref{eq:3.1} defined
on $[-1,1]$, repeated for convenience below 
\begin{equation*} 
T_D(x)= \left\{
\begin{array}{rcr}
-1+2 (x+1)/(1+a) & \mbox{ if } & -1 \leq x\leq a\\
1+2 (x-1)/(1-a) & \mbox{ if } & a < x \leq 1
\end{array} \right.,
\end{equation*}
where the parameter $a \in (-1,1)$ determines the skew of the map.
The eigenvalues of the transfer operator (\ref{eq:2.1.3}),
defined on a space of analytic functions, say $H^2(D_R)$ with
$R>1$, are simply given by $\lambda_n=((1+a)/2)^{n+1} + ((1-a)/2)^{n+1}$
with $n=0,1,\ldots$, see for example
\cite{MoSoOs_PTP81} for an elementary account.
The finite-rank approximation given by
the matrix $L_N$ (see Equation (\ref{eq:2.2.10})) becomes exact in this piecewise linear case
and the first $N$ eigenvalues are given by the diagonal matrix
elements $(L_N)_{kk}$. Furthermore, if we perform EDMD with an infinite number
of nodes along the lines of Equation (\ref{eq:2.2.2}), EDMD is equivalent to the eigenvalue
problem of the matrix $L_N$ (see the proof of Lemma \ref{lem:10}, since $(1-P_N)L P_N=0$
in the case of piecewise linear maps). This is also reflected
in the numerical evaluation of EDMD where results for the leading $N$
eigenvalues coincide with the exact values to the numerical accuracy used, when
the matrices $H_N$ and $G_N$ are computed via Equation (\ref{eq:2.2.2}).
We note that computing the eigenvalues via EDMD requires solving the
generalised eigenvalue problem (\ref{eq:1.1}), which in itself poses a considerable
numerical challenge. For our numerics we have relied on standard commercial numerical solvers
contained, for example, in the Maple or Mathematica software packages. Alternatively, one can
use standard numerical eigenvalue solvers by employing the $\varepsilon$-pseudoinverse
of the matrix $H_N$, which, in practice, yields the same numerical
result for the eigenvalues.

The picture differs if we focus on EDMD for a finite number $M$ of equidistant
nodes $x_m=-1+(2m-1)/M$, $m=1,\ldots,M$, see the right panel of Figure~\ref{fig:1.1}. Then the collocation error
in computing the integrals, see Proposition \ref{lem:7}, causes corresponding errors
for the eigenvalues themselves. If we denote by
$\Delta_n=|\lambda_n-\lambda_n^{(EDMD)}|$ the absolute difference between the
exact eigenvalue and the corresponding result produced by EDMD, then
this error shows indeed the characteristic $1/M$ dependence if the leading
eigenvalues are considered, see Figure \ref{fig:2.1}. In addition, the error
shows large fluctuations which may be the result of the constant
expansivity in the piecewise linear chaotic map. The number of observables had
no impact on the error when infinite number of nodes were considered.
For a finite number of nodes a fairly small
number of observables is sufficient to obtain small errors and the error
may in fact increase (as seen in the right panel of Figure~\ref{fig:1.1}) if the number of observables is increased, see Theorem \ref{thrm:1}.
We will elaborate further on this observation in the next section.

\begin{figure}[!h]
\includegraphics[width=0.5 \textwidth]{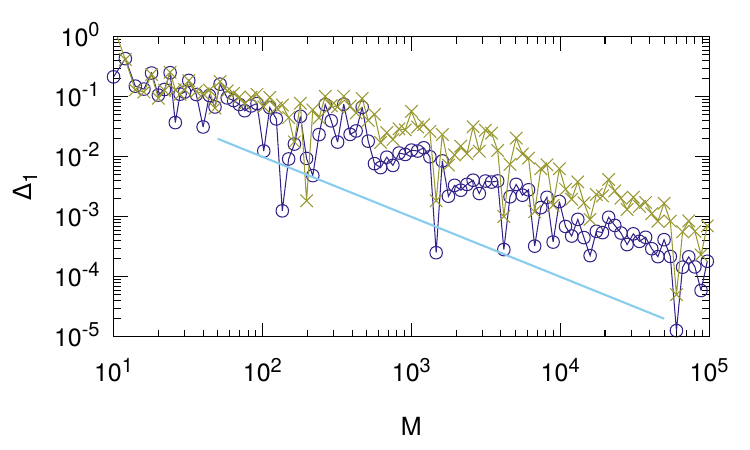}
\includegraphics[width=0.5 \textwidth]{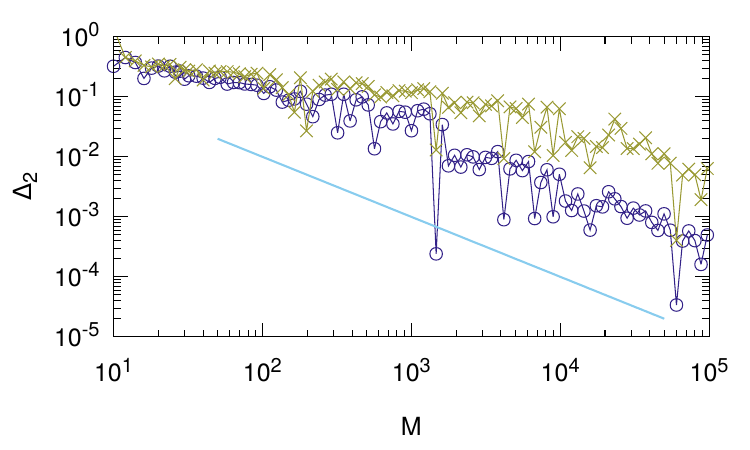}
\caption{Difference $\Delta_n$ between the exact
eigenvalue of the transfer operator
and the approximate eigenvalue obtained by EDMD
using monomials as observables, for the
skewed doubling map, Equation (\ref{eq:3.1}), with $a=1/\sqrt{2}$, in dependence on the number
of equidistant nodes $M$ on a double logarithmic scale.
Left: Error of the subleading eigenvalue, $\Delta_1$, for
$N=5$ (blue, circles) and $N=6$ (amber, cross). The line (cyan) shows
an algebraic decay of order $1/M$.
Left: Error of the second nontrivial eigenvalue, $\Delta_2$, for
$N=5$ (blue, circles) and $N=6$ (amber, cross). The line (cyan) shows
an algebraic decay of order $1/M$.}
\label{fig:2.1}
\end{figure}

\subsection{Blaschke maps}\label{sec:3.2}

To obtain further insight let us consider a piecewise analytic map with
nonlinear branches. For a reliable error estimate we focus
on the class of so-called interval Blaschke maps, for which the exact spectrum
of the transfer operator on $H^2(D_R)$ can been computed explicitly~\cite{SlBaJu_NONL13,BJS_AHP}.
We consider here the simplest case which is given by
a symmetric nonlinear deformation of the doubling map
\begin{equation}\label{eq:3.2}
T_B(x)= \left\{
\begin{array}{rcr}
\displaystyle 2x + 1  + \frac{2}{\pi} \mbox{arctan}\left(
\frac{\mu \sin(\pi x)}{1-\mu \cos(\pi x)}\right)
& \mbox{ if } & -1 \leq x\leq 0\\
\displaystyle 2x - 1  + \frac{2}{\pi} \mbox{arctan}\left(
\frac{\mu \sin(\pi x)}{1-\mu \cos(\pi x)}\right)
 & \mbox{ if } & 0 < x \leq 1
\end{array} \right.
\end{equation}
where $\mu\in (-1,1)$ denotes the parameter of the map.
The exact spectrum consists of the trivial eigenvalue
$\lambda_0=1$ and two nontrivial families of eigenvalues given by
$\{\mu^n : n=1,2,\ldots \}$ and $\{ (\mu/2+1/2)^n : n=1,2,\ldots \}$.
Eigenvalues in the first family have geometric multiplicity two, while 
eigenvalues in the second family are simple. The two inverse branches
can be computed in a straightforward way and they are given by
$\varphi_{\ell}(x)=x/2 +(-1)^\ell \mbox{arccos}(\mu \cos(\pi x/2))/\pi$
with $\ell=1,2$. If $|\mu|\leq 0.3$ then there exists some $R>1$ so that
these branches are analytic on a disk $D_R$ in the
complex plane and thus satisfy the conditions required for the
transfer operator to be compact, see Section \ref{sec:2.1} for details.

We first focus on EDMD with an infinite number of nodes, see Equation (\ref{eq:2.2.2}). In this case
(see Proposition \ref{lem:4}) the finite-rank approximation by the matrix $L_N$, Equation (\ref{eq:2.2.10}),
produces errors which are exponentially small in $N$.
EDMD yields eigenvalue estimates which are exponentially close to the
exact values (Proposition \ref{lem:11}),
and the errors $\Delta_n$ decay exponentially with the number of
observables used, as can also be observed numerically in
Figure \ref{fig:2.2}. We note that the two different sets of eigenvalues
which are contained in the spectrum of the transfer operator have different
exponential decay rates, which might be caused by the multiplicities of
the two sets of eigenvalues.

\begin{figure}[!h]
\centering \includegraphics[width=0.7 \textwidth]{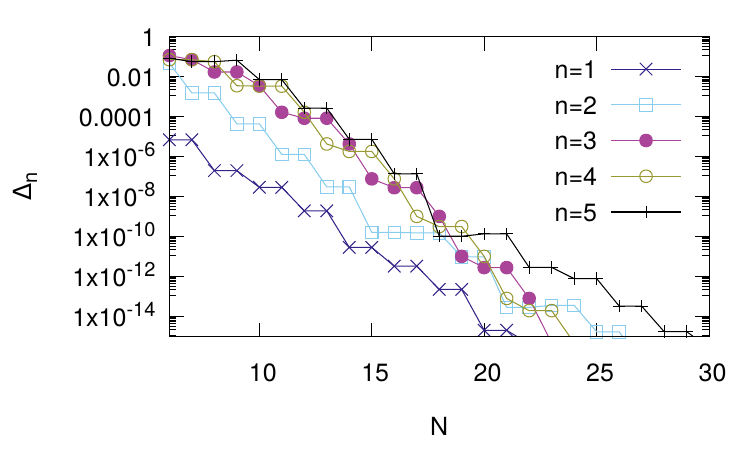}
\caption{Difference $\Delta_n$ between the exact
eigenvalue of the transfer operator
and the approximate eigenvalue obtained by EDMD
using monomials as observables and an infinite number of nodes, for a
Blaschke interval map, Equation (\ref{eq:3.2}), with $\mu=0.3$ in dependence on the
number of observables $N$ on a semi-logarithmic scale.
Results for the first five
nontrivial eigenvalues are shown. EDMD has been evaluated
for an infinite number of nodes, see Equation (\ref{eq:2.2.2}).
}
\label{fig:2.2}
\end{figure}

As a next step we investigate the impact of a finite number of nodes,
when EDMD is applied to the Blaschke map (\ref{eq:3.2}) with a
fixed number $N$ of observables. We again choose $M$ centralised nodes
$x_m=-1+(2m-1)/M$, $m=1,\ldots,M$ to evaluate the sums in Equations (\ref{eq:2.2.1}).
While the finite number of observables causes only an exponentially
small error, the total error is now dominated by the collocation errors
of the matrix elements (Proposition \ref{lem:7}), which are algebraic in the
number of nodes used. Since we are in a symmetric case with nodes
being interval midpoints, we expect the collocation errors to be of order $\mathcal{O}(M^{-2})$,
as indeed observed in Figure \ref{fig:2.3} for the leading eigenvalues.
However, the asymptotic behaviour sets in later
for larger number of nodes 
when higher eigenvalues are considered (see also Theorem \ref{thrm:1}).

\begin{figure}[!h]
\centering \includegraphics[width=0.7 \textwidth]{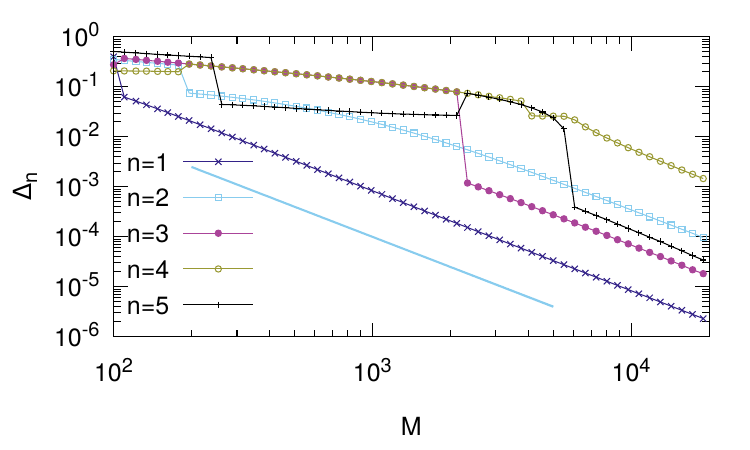}
\caption{Difference between the exact
eigenvalue of the transfer operator
and the approximate eigenvalue obtained by EDMD
using monomials as observables, for a
Blaschke interval map, Equation (\ref{eq:3.2}), with $\mu=0.3$ in dependence on the
number of equidistant nodes $M$ on a double logarithmic scale.
Results for the first five nontrivial eigenvalues are shown.
The jumps are likely to be
artefacts caused by sorting eigenvalues by size of modulus.
EDMD has been evaluated for $N=15$ observables.
The line (cyan) shows an algebraic decay of order $1/M^2$.
}
\label{fig:2.3}
\end{figure}

We will now demonstrate the joint dependence of the eigenvalue approximation error on both
the number of observables $N$ and the number of nodes
$M$, again using the Blaschke map (\ref{eq:3.2}), with observables
given by monomials, and a set of centralised equidistant nodes.
As already observed in
Figure \ref{fig:2.1}, we can expect to require a sufficiently large number of
nodes to deal with expressions containing monomials of high power, so that convergence
of eigenvalues depends on a nontrivial relation between $N$ and $M$ (see Corollary \ref{coro:1}).
The results
 displayed in Figure \ref{fig:2.4} confirm this expected lack of uniformity as convergence
seems to occur in a triangular shaped region when $N$ and $M$ jointly tend to infinity.
Numerically, a constraint of the type $M > c N^2$ seems to be required to ensure
convergence of the EDMD eigenvalues towards the correct spectrum of the
transfer operator. Hence, the numerical result seems to indicate that the terms
$\gamma^N$ occurring in Theorem \ref{thrm:1} overestimate the actual error,
and a tighter bound avoiding such terms might be obtained.
In fact, the statement of Theorem \ref{thrm:1} is uniform across the spectrum,
and the estimate might be improved by
using $\varepsilon$-pseudoinverses for operators and focusing solely on the
leading part of the spectrum.

\begin{figure}[h!]
\includegraphics[width=0.5 \textwidth]{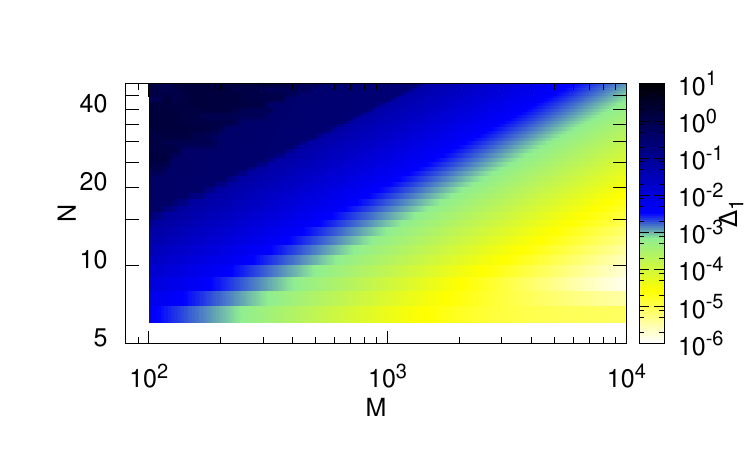}
\includegraphics[width=0.5 \textwidth]{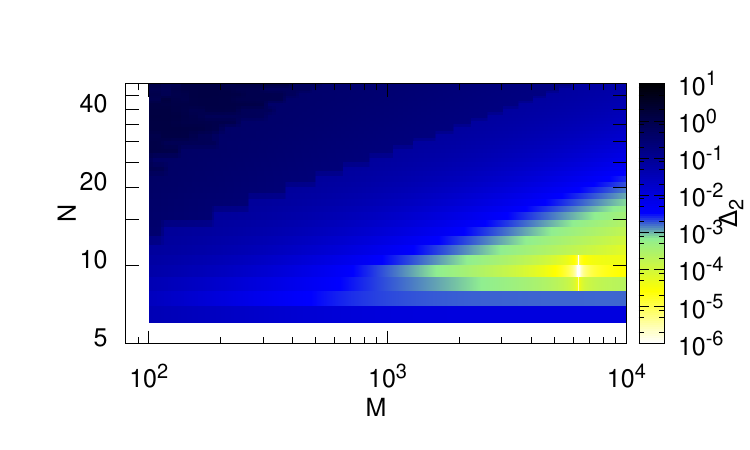}
\caption{Density plot of the difference between the exact
eigenvalue of the transfer operator
and the approximate eigenvalue obtained by EDMD
using monomials as observables, for a
Blaschke interval map with $\mu=0.3$ in dependence on the
number of equidistant nodes $M$  and the number of observables $N$.
The data are shown on a double logarithmic scale.
Left: error of the subleading eigenvalue, $\Delta_1$, right: error of the
second nontrivial eigenvalue, $\Delta_2$.
}
\label{fig:2.4}
\end{figure}

In line with Theorem \ref{thrm:1}, the numerical investigation of simple toy models
shows exponential convergence in the number of
observables, so that a small number of observables turns out
to be sufficient. However the required number of nodes $M$ is large
and grows rapidly with increasing $N$, and we observe a nontrivial relation
between $N$ and $M$ which needs to be satisfied to obtain convergence of the EDMD
algorithm.

\subsection{Impact of observables}\label{sec:3.3}

We have derived our results for simple toy models where a rigorous
account for our findings can be provided. In general, however, the quality of the
EDMD results is hard to judge, as the outcome depends on the observables in
a subtle way. Even when EDMD converges, the meaning of the
eigenvalues obtained may not be obvious. We have demonstrated this phenomenon
for the skewed doubling map when EDMD is applied with observation through
Fourier modes, see the left panel of Figure~\ref{fig:1.1}.

Let us look at this case in some more detail. To simplify the analysis
consider the case of infinite number of nodes where the relevant matrix
elements are simply given by
\begin{align}\label{eq:4.1}
H_{k\ell} &= \frac{1}{2} \int_{-1}^1 \exp(i \pi k x) \exp(-i \pi
\ell x) \, dx = \delta_{k \ell}\nonumber \\
G_{k \ell} &= \frac{1}{2} \int_{-1}^{1} \exp(i \pi k T(x))
\exp(-i \pi \ell x) \, dx \nonumber \\
&= \frac{1+a}{2}  e^{i \pi \ell(1-a)/2}
\frac{\sin(\pi(k-\ell(1+a)/2))}{\pi(k-\ell(1+a)/2)} \nonumber \\
&  \qquad + \frac{1-a}{2}  e^{-i \pi \ell(1+a)/2}
\frac{\sin(\pi(k-\ell(1-a)/2))}{\pi(k-\ell(1-a)/2)}
\end{align}
when using Equation (\ref{eq:3.1}). While not crucial,
we have used here the complex conjugate for the second entry
as that conveniently results for $H$
in  the identity matrix. 
Figure \ref{fig:2.5} shows the result for the
subleading eigenvalue $\lambda_1$ computed from the generalised eigenvalue problem,
that is, the subleading eigenvalue of $G_N$, in dependence on the skew of the map.
We obtain convergence when $N$ increases, but convergence slows down
considerably for maps
with a small skew $a$. Furthermore, the modulus of the subleading eigenvalue
seems to converge towards $(1+|a|)/2$,
an expression
which coincides with the essential spectral radius of the transfer
operator considered
on the space of functions of bounded variation, see for example~\cite{HoKe_MZ82}.
With hindsight we can also provide an analytic argument supporting this observation.
We may consider the skewed doubling map as a piecewise analytic map on the complex
unit circle, following the construction which is used for analytic maps, see for example~\cite{SlBaJu_NONL13}.
Fourier modes become complex monomials, and smooth functions on the interval become discontinuous
on the unit circle, as smooth functions on an interval normally do not have a continuous
periodic extension beyond the endpoints of the interval. If one applies the transfer
operator to such discontinuous functions the number of discontinuities
may increase, but the variation of the function remains bounded because of the
expansivity of the map. While one would naively expect that Fourier modes
imply studying transfer operators on a space of square integrable functions we
encounter here an unexpected subtle mechanism where Fourier modes
result in studying the transfer operator on the space of functions with bounded
variation. This observations parallels the approximation of
smooth maps by piecewise linear Markov maps, see for example \cite{SoYoOkMo_JSP84,JuFu_PD93}, which
effectively also amounts to studying transfer operators on the space of functions
with bounded variation \cite{SlBaJu_JPA13}.
In summary, it is not surprising that the corresponding essential spectral radius
shows up in the numerical
outcome of EDMD. It is also worth mentioning that the essential spectral radius
does not have to be an eigenvalue of the transfer operator, so that some care
has to be taken when interpreting eigenmodes obtained by EDMD.

\begin{figure}[!h]
\centering
\includegraphics[width=0.7 \textwidth]{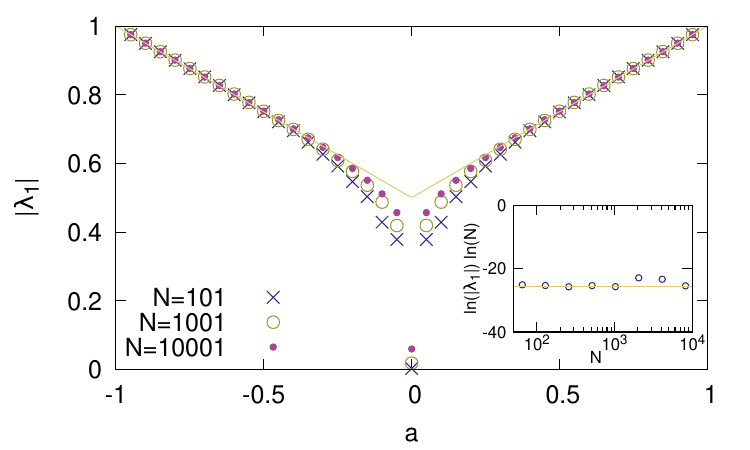}

\caption{Absolute value of the subleading eigenvalue
$\lambda_1$ obtained for the
skewed doubling map, Equation (\ref{eq:3.1}), in dependence on the skew $a$,
using EDMD with $N$ Fourier modes in the limit
of infinitely many nodes, see Equations (\ref{eq:4.1}) (symbols). The line (amber) shows the
essential spectral radius of the transfer operator defined on the
space of functions with bounded variation. The inset shows the
product $\ln|\lambda_1| \ln N$ as a function of $N$ for a doubling map
with tiny skew $a=10^{-16}$ (symbols). The horizontal line (amber) indicates
a crude analytic estimate (see the text for details).
}
\label{fig:2.5}
\end{figure}

Let us also comment on the slow convergence which is visible in Figure
\ref{fig:2.5} close to $a=0$. The transfer operator of the doubling map, $a=0$,
effectively halves wavenumbers of the Fourier modes, that is, the matrix $G_N$ in Equation (\ref{eq:4.1})
has a single eigenvalue $1$ and a degenerate eigenvalue zero. The matrix $G_N$ for
$a=0$ consists of Jordan blocks, and the largest block has size $b$ where
$2^b \sim N$. For $a \neq 0$, standard perturbation theory tells us
that nonvanishing eigenvalues of the size $|\lambda_1| \sim |a|^{1/b} \sim |a|^{\ln 2/\ln N}$
are generated. This simple reasoning is consistent with the numerical data,
see the inset in Figure \ref{fig:2.5}. It also supports the assertion that EDMD in this
example yields the essential spectral radius for the transfer operator being defined
on the space of functions with bounded variation.

In summary, while EDMD is a powerful data analysis tool, results
have to be taken with a grain of salt, as the interpretation of the eigenvalues and
eigenmodes is far from obvious. In particular, it is often not clear which transfer or Koopman operator
is studied by EDMD, as the underlying operator (together with a function space) is determined in a subtle way by the observables used, if it exists at all. Nevertheless, at the phenomenological level EDMD is a very appealing tool, and further rigorous studies are required to complement
its empirical success with provable convergence guarantees and rates. 

\section*{Acknowledgement} O.F.B. gratefully acknowledges support by EPSRC though grant
EP/R012008/1, J.S. acknowledges support by ERC-Advanced Grant 833802-Resonances, and W.J.
acknowledges funding by German Research Foundation SFB 1270/2 - 299150580.

\paragraph{Declaration of interests:} None

\end{document}